\documentclass[11pt]{article}
\usepackage[T1]{fontenc}
\usepackage[utf8]{inputenc}
\usepackage{lmodern}
\usepackage{color}
\usepackage{amsmath,amssymb,amsthm}
\usepackage[margin=1in]{geometry}
\usepackage[hidelinks]{hyperref}
\numberwithin{equation}{section}
\newtheorem{theorem}{Theorem}[section]
\newtheorem{proposition}[theorem]{Proposition}
\newtheorem{lemma}[theorem]{Lemma}

\theoremstyle{remark}

\newcommand{\R}{\mathbb R}
\newcommand{\F}{\mathcal F}
\newcommand{\Oc}{\mathcal O_c}
\newcommand{\ip}[2]{\langle #1,#2\rangle}
\newcommand{\norm}[1]{\lVert #1\rVert}
\newcommand{\ess}{\mathrm{ess}}
\DeclareMathOperator{\Span}{span}
\DeclareMathOperator{\dist}{dist}
\title{Nonlinear coercivity and orbital stability of smooth solitary waves for the modified Camassa--Holm equation}
\author{Xijun Deng\textsuperscript{1},\quad Ji Li\textsuperscript{2}\thanks{Corresponding author. }\,,  \quad Zhisu Liu\textsuperscript{3}\\[6pt]
\small\textsuperscript{1}Department of Mathematics, Hubei University of Automotive Technology,\\
\small Shiyan, Hubei 442002, P. R. China\\
\small\texttt{xijundeng@yeah.net}\\
\small\textsuperscript{2} School of Mathematics and Statistics,\\
\small Huazhong University of Science and Technology, Wuhan, Hubei, 430074, P. R. China \\
\small\texttt{liji@hust.edu.cn}\\
\small\textsuperscript{3} School of Mathematics and Physics,\\
\small China University of Geosciences, Wuhan, Hubei, 430074, P. R. China \\
\small\texttt{liuzhisu@cug.edu.cn}}
\date{}
\begin{document}
\maketitle
\begin{abstract}
In this paper, we establish orbital stability of smooth zero-background solitary waves for the modified Camassa--Holm
equation in $H^1(\R)\cap W^{1,4}(\R)$ for the full range $\gamma<c<2\gamma$. The main difficulty is that spectral coercivity
of the Hessian does not by itself yield nonlinear coercivity in this topology, because normalized perturbations may develop
derivative concentration. In order to overcome this obstruction, we apply a penalized
Lyapunov functional and introduce a derivative factorization.
The key ingredient is a concentration-coercivity principle for the normalized Lyapunov energy, which yields local nonlinear
Lyapunov coercivity. Our result extends the $H^1(\mathbb R)\cap W^{1,4}(\mathbb R)$ orbital-stability results of Li and Liu
[Ann. PDE, 2021] and Zhang and Zhu [JDE, 2025] to the entire parameter range $\gamma<c<2\gamma$.
\end{abstract}
\noindent\textbf{Keywords.} modified Camassa--Holm equation; orbital stability; spectral coercivity; concentration; Lyapunov functional.
\par\smallskip\noindent\textbf{2020 Mathematics Subject Classification.} 35Q35, 35B35, 35B40, 37K40.

\section{Introduction}\label{sec:intro}
The classical orbital-stability theory of Benjamin \cite{ref1}, Weinstein \cite{ref29} and Grillakis, Shatah and Strauss \cite{ref11,ref12} relates the stability of a solitary wave to the spectral structure of the Hessian of a conserved action and a slope condition along the solitary-wave branch. Once the symmetry and constraint directions have been taken into account, these conditions provide coercivity of the associated quadratic form. For a Lyapunov argument, however, this coercivity must also hold at the nonlinear level in the topology under consideration. This passage is not automatic and can be rather challenging when the conserved action contains higher powers of derivatives. The modified Camassa--Holm (mCH) equation with linear dispersion serves as a typical example.

The equation reads
\begin{equation}\label{eq:1.1}
m_t+\bigl((u^2-u_x^2)m\bigr)_x+\gamma u_x=0,\qquad m=u-u_{xx},\qquad \gamma>0,
\end{equation}
for a real-valued function $u=u(t,x)$ on $\R_+\times\R$. The equation belongs to the class of integrable shallow-water models 
associated with the Camassa–Holm equation \cite{ref2} and the tri-Hamiltonian duality developed in \cite{ref9,ref10,ref23,ref24}.
 Other integrable Camassa–Holm-type equations with cubic nonlinearities have been investigated in a broader 
 context \cite{ref14,ref15}. For the mCH equation, its qualitative theory includes wave breaking, peakons and smooth 
 solitary waves; see, among others, \cite{ref3,ref5,ref8,ref13,ref21,ref22,ref25,ref26}. Of particular interest here are the smooth zero-background
solitary waves $\phi_c(x-ct)$, which exist, up to translation and sign, for the wave-speed regime $\gamma<c< 2\gamma$;
see~\cite{ref17,ref31}. The positive profile $\phi_c$ is even, strictly decreasing on $(0,\infty)$,
and exponentially decaying.

The natural energy topology is
\[X:=H^1(\R)\cap W^{1,4}(\R),\qquad \norm{u}_X:=\norm{u}_{H^1}+\norm{u}_{W^{1,4}}.\]
The two conserved quantities are
\begin{equation}\label{eq:2.3}
E(u)=\frac12\int_\R(u^2+u_x^2)\,dx
\end{equation}
and
\begin{equation}\label{eq:2.4}
H(u)=\frac14\int_\R\left(-u^4-2u^2u_x^2+\frac13u_x^4-2\gamma u^2\right)\,dx.
\end{equation}
Set
\begin{equation}\label{eq:2.5}
Q_c(u):=H(u)+cE(u),
\end{equation}
then $Q_c'(\phi_c)=0$.
Here the conserved Hamiltonian contains a quartic derivative term $u_x^4$ and the indefinite mixed term $u^2u_x^2$.
Consequently, \emph{the main obstacle is that
a quadratic expansion at a solitary wave does not by itself provide the nonlinear control needed in $X$.}
Establishing orbital
 stability in $X$ is therefore substantially different from proving stability in a stronger Sobolev space:
 perturbations may be small in $X$ while possessing large higher derivatives, and the nonlinear derivative terms
 cannot be treated solely through higher-regularity estimates.

The orbital stability of smooth solitary waves for the mCH equation has been investigated in several recent works.
 Li and Liu \cite{ref17} and Zhang and Zhu \cite{ref31} studied orbital stability of smooth solitary waves of
 \eqref{eq:1.1} by global constrained minimization. Its main bottleneck is the recovery of strong $H^1\cap W^{1,4}$
 compactness of minimizing sequences by employing the compactness-concentration principle established in \cite{ref19,ref20}: additional inequalities between the conserved quantities are used to control
 the mixed term $u^2u_x^2$, and these inequalities restrict the accessible wave-speed regime.
 In particular, Zhang and Zhu \cite{ref31} proved orbital stability for $\gamma<c<\hat c$, where $\hat c\simeq1.27\gamma$, extending the original work of Li and Liu \cite{ref17}. Li, Liu and Zhu \cite{ref18} obtained stability throughout
 the full open smooth-wave range in the stronger $H^3$ topology, using conserved quantities in the momentum variable.
 Deng, Lafortune and Liu \cite{ref6} treated a distinct family of smooth waves on nonzero background in a momentum-space framework.
Consequently, orbital stability of zero-background smooth solitary waves in the natural energy
topology $X$, without the additional regularity required by the $H^3$ theory, remains unresolved for $c\in[\hat c,2\gamma)$. We solve this problem.

The analytical core of the paper is mainly the following three steps:\\
\emph{Step 1}. An observation from \cite{ref17} that
the Hessian operator
$$
L_c=Q_c''(\phi_c)=-\partial_x(A_c\partial_x)+A_c-2\phi_c(\phi_c-\phi_{c,xx})-\gamma
$$
of the conserved quantity, with uniform coefficient bound
\begin{equation}\label{eq:1.3}
A_c(x):=c-\phi_c^2+\phi_{c,x}^2\geq 2\gamma-c>0,\qquad \gamma<c<2\gamma,
\end{equation}
enables a detailed analysis of the spectrum of operator $L_c$. Particularly, $L_c$ has exactly one negative eigenvalue, $\ker L_c=\Span\{\phi_{c,x}\}$, and
\[\frac{d}{dc}E(\phi_c)=\sqrt{\frac{c}{c-\gamma}}>0.\]
These properties lead to constrained spectral coercivity.

\emph{Step 2}. We state an exact nonnegative derivative-energy factorization.
Expanding $Q_c$ around $\phi_c$, we separate the derivative energy
\[
B_c(x,v,w)
=\frac12(A_c-2\phi_cv-v^2)w^2
+\frac13\phi_{c,x}w^3
+\frac1{12}w^4.
\]
At $v=0$, this expression admits the exact factorization (see Proposition \ref{prop:factorization})
\begin{equation}\label{eq:1.3+}
B_c(x,0,w)
=\omega_c(x)w^2
+\frac1{12}w^2(w+2\phi_{c,x})^2,
\end{equation}
where
\begin{equation}\label{eq:1.4}
\omega_c(x)
=\frac12(c-\phi_c^2)+\frac16\phi_{c,x}^2
\geq\frac{2\gamma-c}{2}>0.
\end{equation}
This identity (\ref{eq:1.3+}) provides more information than the nonnegativity of the derivative energy, whose two terms play complementary roles in the nonlinear coercivity argument. The strictly positive
weighted quadratic term assigns a definite energy cost to derivative concentration that is not retained
by the weak limit. Meanwhile, the nonnegative square term recovers the additional derivative contribution needed to
reconstruct the complete Hessian quadratic form on the weak component. Thus the factorization simultaneously preserves
the correct linearized energy and controls the nonlinear defect associated with weak convergence.


\emph{Step 3}. We introduce a penalized Lyapunov
functional defined by
\[
\F_c(u):=Q_c(u)-Q_c(\phi_c)+K_c\bigl(E(u)-E(\phi_c)\bigr)^2
\]
to establish local nonlinear Lyapunov coercivity without relying on global compactness of constrained minimizing sequences.

More precisely, we establish \emph{a concentration-coercivity principle} to $\F_c$.
For normalized perturbations $z_n=\frac{v_n}{||v_n||_{H^1}}$, weak $H^1$-convergence may
leave a derivative defect measure $d\mu^*$.
The resulting lower bound is that if $\ip{z_n}{\phi_x}=0$, then
\begin{equation}\label{eq:1.6}
\liminf_{n\to\infty}\frac{\F_c(\phi_c+v_n)}{r_n^2}
\geq\mu_c\norm{z}_{H^1}^2+\frac{2\gamma-c}{2}d\mu^*(\R),\qquad\mu_c>0.
\end{equation}
Thus both the weak component and the local derivative defect carry positive energy. Far-field positivity excludes escape of normalized mass to infinity, yielding local nonlinear Lyapunov coercivity; conservation of the Hamiltonian then recovers the $W^{1,4}$-control required for orbital stability.


Our main result is stated as follows.
\begin{theorem}\label{thm:stability}
Let $\gamma>0$ and let $\phi_c(x-ct)$ be the positive smooth zero-background solitary wave of \eqref{eq:1.1} with $\gamma<c<2\gamma$.
Then $\phi_c$ is orbitally stable in $X$ up to the maximal existence time of the corresponding strong solution. More precisely, for every $\varepsilon>0$ there exists $\delta>0$ such that, if $u_0\in H^s(\R)$ with $s>\frac{5}{2}$ and
\[\norm{u_0-\phi_c}_X<\delta,\]
then the associated solution $u(t)$ satisfies
\begin{equation}\label{eq:stability}
\sup_{0\leq t<T}\inf_{y\in\R}\norm{u(t)-\phi_c(\cdot-y)}_X<\varepsilon.
\end{equation}
The same conclusion holds for the negative solitary wave $-\phi_c$.
\end{theorem}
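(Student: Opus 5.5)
The plan is to follow the Lyapunov scheme of Step 3, working with the penalized functional $\F_c$. Both $E$ and $H$ are conserved along strong solutions, so $\F_c(u(t))=\F_c(u_0)$ for $0\le t<T$. Since $Q_c'(\phi_c)=0$ and $\F_c$ is continuous on $X$ with $\F_c(\phi_c)=0$, we get $\F_c(u_0)\lesssim \norm{u_0-\phi_c}_X^2$ for $u_0$ near $\phi_c$.

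\textbf{Modulation.} For $u$ in a small $X$-tube around the orbit of $\phi_c$, the implicit function theorem gives a translation $y(u)$ with
\[
\ip{u-\phi_c(\cdot-y)}{\phi_{c,x}(\cdot-y)}=0 .
\]
This uses $\norm{\phi_{c,x}}_{L^2}\neq0$ and translation invariance of $\F_c$.

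\textbf{Local coercivity.} The core claim is that there exist $r_0,\kappa>0$ such that
\[
\F_c(\phi_c+v)\ge \kappa\norm{v}_{H^1}^2
\]
for all $v$ with $\ip{v}{\phi_{c,x}}=0$ and $\norm{v}_X\le r_0$. I prove this by contradiction. Suppose $v_n\to0$ in $X$, $\ip{v_n}{\phi_{c,x}}=0$, and $\F_c(\phi_c+v_n)/r_n^2\to \ell\le 0$, where $r_n=\norm{v_n}_{H^1}$. Set $z_n=v_n/r_n$. Extract a weak limit $z_n\rightharpoonup z$ in $H^1$ together with a derivative defect measure, namely the weak-$*$ limit of $|z_{n,x}-z_x|^2dx$. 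Apply the concentration-coercivity lower bound \eqref{eq:1.6}. To obtain it, I expand $Q_c$ and isolate the derivative energy $B_c$; the factorization of Proposition \ref{prop:factorization} then splits the quadratic part into two pieces.
\begin{itemize}
\item On the weak component, the quadratic part reproduces $\tfrac12\ip{L_cz}{z}$. The penalty $K_c(\ip{\phi_c-\phi_{c,xx}}{z})^2$ supplies the negative direction, via the slope $\frac{d}{dc}E(\phi_c)>0$ and the one negative eigenvalue. Orthogonality to $\phi_{c,x}=\ker L_c$ then gives $\mu_c\norm{z}_{H^1}^2$.
\item On the defect, the weight $\omega_c\ge\frac{2\gamma-c}{2}$ gives the second term. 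The cubic and quartic remainders are controlled by the nonnegative square and by $\norm{v_n}_X\to0$.
\end{itemize}
Mass escaping to spatial infinity costs energy $\ge\frac{2\gamma-c}{2}$ per unit, because $\phi_c\to0$ there and $c-\gamma>0$ makes the far-field quadratic form positive. Since $\norm{z_n}_{H^1}=1$, the total mass is $1$, split among weak limit, local defect, and escape. Hence $\ell\ge\min(\mu_c,\tfrac{2\gamma-c}{2},c-\gamma)>0$, a contradiction.

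\textbf{Bootstrap.} Take $\norm{u_0-\phi_c}_X<\delta$ and let $t^*$ be the first time $\inf_y\norm{u(t)-\phi_c(\cdot-y)}_X$ reaches $\varepsilon\le r_0$. This distance is continuous in $t$ because $u\in C([0,T);H^s)\subset C([0,T);X)$. On $[0,t^*]$, coercivity gives
\[
\norm{v(t)}_{H^1}^2\le\kappa^{-1}\F_c(u_0)\lesssim\delta^2 .
\]
To upgrade to $W^{1,4}$, I combine the conserved $Q_c$ with $H^1$ smallness and the $L^\infty$ bound $\norm{v}_{L^\infty}\lesssim\norm{v}_{H^1}$. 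In the expansion, $\int B_c(x,v,v_x)$ is then bounded by a small constant. The factorization gives
\[
B_c(x,v,w)\ge \frac1{12}w^2(w+2\phi_{c,x})^2-C|v|w^2 ,
\]
which yields $\int v_x^4\lesssim \F_c+o(1)\norm{v_x}_{L^2}^2$. Alternatively, write $v_x^4\le 2w^2(w+2\phi_{c,x})^2+C\phi_{c,x}^2w^2$. Either way, $\norm{v_x}_{L^4}$ is small, so the distance stays $<\varepsilon$ at $t^*$, a contradiction. For $-\phi_c$, apply the symmetry $u\mapsto-u$ of \eqref{eq:1.1}.

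The main obstacle is the concentration-coercivity lower bound. Specifically, one must show that the cross terms between $z$ and the oscillating part $z_n-z$ vanish in the limit, including the cubic term $\frac13\phi_{c,x}w^3$ when $w=r_nz_{n,x}$ concentrates. My first attempt is to keep $w$ unscaled inside the square $\frac1{12}w^2(w+2\phi_{c,x})^2$ and absorb the cubic term there, and to use local compactness of $z_n$ in $L^2_{loc}$ to handle the $v$-dependent coefficient terms.
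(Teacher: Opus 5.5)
\textbf{Verdict.} Your overall architecture matches the paper's: penalized $\F_c$, modulation, local coercivity by contradiction via the concentration-coercivity bound plus exclusion of escape, and a first-exit-time argument. However, the step you yourself flag as the main obstacle is left open, and the mechanism you suggest for it is not the one that works.

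\textbf{The gap.} What has to be proved is
\[
\liminf_{n\to\infty}\frac1{12}\int_\R|z_{n,x}|^2\bigl(v_{n,x}+2\phi_x\bigr)^2\,dx\geq\frac13\int_\R\phi_x^2|z_x|^2\,dx.
\]
This is exactly what upgrades the weight $\omega_c$ to $\frac12A_c$ on the weak component, and so produces $\frac12q_c[z]$.

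It cannot be obtained by showing that cross terms between $z_x$ and $z_{n,x}-z_x$ vanish. After normalization the cubic and quartic pieces are $r_n^{-2}\int\phi_xv_{n,x}^3$ and $r_n^{-2}\int v_{n,x}^4$, and these may be unbounded even though $v_n\to0$ in $X$. The mixed terms are not controlled by the available bounds: $z_{n,x}$ is bounded only in $L^2$, $v_{n,x}$ is small only in $L^4$, and $z_x$ need not lie in $L^4$. So no exact weak/defect splitting of the derivative energy is available; only a one-sided bound holds.

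The paper proves that bound by lower semicontinuity on a good set. Let $G_n^\delta=\{|v_{n,x}|\le\delta\}$. Chebyshev gives $|\R\setminus G_n^\delta|\le\delta^{-4}\norm{v_{n,x}}_4^4\to0$, hence $z_{n,x}\mathbf 1_{G_n^\delta}\rightharpoonup z_x$ in $L^2$. On $G_n^\delta$ one has $(v_{n,x}+2\phi_x)^2\ge(2|\phi_x|-\delta)_+^2$. Weak lower semicontinuity with this bounded weight, followed by $\delta\to0$, gives the display.

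The defect is then priced only by $\int\omega_c\,d\mu^*$, which comes from the local weak-$*$ convergence of $|z_{n,x}|^2dx$ tested against $\chi_R\omega_c$. The square term may contribute nothing there, because on the concentration set $v_{n,x}$ need not be small and can sit near $-2\phi_x$. Also, the $v$-dependent coefficients need no local compactness: by \eqref{eq:4.13} they cost $O(\norm{v_n}_\infty)$ after normalization.

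\textbf{The rest of your plan.} This is sound, with two genuine differences from the paper and a few small corrections.

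\emph{Escape to infinity.} Your one-step budget $1=\norm{z}_{H^1}^2+\mu^*(\R)+(\text{escaping mass})$ can be made rigorous with smooth cutoffs and the global bound $B_c(x,v_n,w)\ge(\omega_c-o(1))w^2$. The paper instead first reads off $z=0$ and $\mu^*=0$ from \eqref{eq:4.12}, and only then kills the tail with \eqref{eq:4.25}--\eqref{eq:4.27}. Escaping $L^2$ mass costs about $\frac{c-\gamma}{2}$ per unit, not $\frac{2\gamma-c}{2}$, because of the factor $\frac12$ in front of $\int V_cv^2$; this is harmless.

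\emph{The $W^{1,4}$ upgrade.} Your route here is different from the paper's and valid. Once $\norm{v}_\infty$ is small, \eqref{eq:4.13} and \eqref{eq:4.8} give $B_c(x,v,w)\ge\frac1{12}w^2(w+2\phi_x)^2$ pointwise. Together with $\int B_c(x,v,v_x)\,dx\le\F_c(u_0)+C\norm{v}_{H^1}^2$, this yields $\norm{v_x}_4^4\lesssim\F_c(u_0)+\norm{v}_{H^1}^2$ and an explicit $\delta(\varepsilon)$. The paper argues by contradiction instead: it recovers $\norm{\widetilde u_{n,x}}_4\to\norm{\phi_x}_4$ from conservation of $H$ via \eqref{eq:5.13}, and concludes by uniform convexity of $L^4$.

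\emph{Modulation.} Your modulation must be quantitative, $\norm{u(\cdot+y)-\phi}_X\le C_{\mathrm{mod}}\,d_X(u,\Oc)$ as in Lemma~\ref{lem:modulation}. Correspondingly, the exit level must satisfy $C_{\mathrm{mod}}\varepsilon<r_0$, not merely $\varepsilon\le r_0$.
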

Here $T\in(0,\infty]$ is the maximal existence time of the strong solution.
The theorem controls the orbit throughout this lifespan; it does not assert global existence
for arbitrary nearby data.

We remark that the use of penalized (or augmented) Lyapunov functional is classical, see \cite{MS93,Stuart08}.
The contribution here is a nonlinear coercivity mechanism adapted to the topology $H^1\cap W^{1,4}$: an exact factorization of the derivative contribution yields a lower bound that controls the weak $H^1$ limit and its derivative defect measure separately. A further far-field argument excludes loss of normalized mass at infinity and upgrades the spectral coercivity to local nonlinear Lyapunov coercivity.
The argument in this paper illustrates a general methodological possibility:
higher-order derivative nonlinearities that obstruct a standard quadratic Lyapunov expansion may possess exact positive structures
capable of controlling the defects responsible for that obstruction. The concentration-coercivity mechanism developed here suggests
a useful direction for studying other variational stability problems in which derivative concentration prevents
a direct passage from spectral to nonlinear coercivity.

Sections 2 and 3 contain results either are already known or follow from standard arguments, and are included for completeness and readability:
Section~\ref{sec:variational} collects the variational framework and solitary-wave identities. Section~\ref{sec:spectral} establishes the spectral structure and differentiability of the solitary-wave branch, then uses the slope condition to obtain constrained coercivity and penalized spectral coercivity. Section~\ref{sec:nonlinear} contains the main novelty. It proceeds from exact nonlinear derivative factorization to a concentration-coercivity principle separating the weak $H^1$ component and the derivative defect, and then uses far-field positivity to obtain nonlinear Lyapunov coercivity. Finally, Section~\ref{sec:stability} completes the orbital-stability argument by quantitative modulation and conservation laws.

\section{Well-posedness and solitary-wave identities}\label{sec:variational}
We first recall the well-posedness and conservation properties needed below. The Cauchy theory for the integrable 
mCH equation was developed in Besov and Sobolev spaces by Fu, Gui, Liu and Qu \cite{ref8}; 
see also \cite{ref7,ref13,ref30}. In particular, when $p=r=2$ their local theory 
yields the Sobolev threshold $s>\frac{5}{2}$.

\begin{proposition}\label{prop:wellposed}
Let $s>\frac{5}{2}$ and $u_0\in H^s(\R)$. Then there exists a maximal time $T\in(0,\infty]$ and a unique strong solution of \eqref{eq:1.1} satisfying
\begin{equation}\label{eq:2.1}
u\in C([0,T);H^s(\R))\cap C^1([0,T);H^{s-1}(\R)).
\end{equation}
The data-to-solution map is continuous in this class. Along such solutions the functionals $E$ and $H$ defined above are conserved.
\end{proposition}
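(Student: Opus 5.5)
This proposition is a quotation of the local Cauchy theory of Fu, Gui, Liu and Qu \cite{ref8}, so the plan is to cite it for existence and uniqueness and to check conservation separately. First I rewrite \eqref{eq:1.1} in nonlocal transport form. Applying $(1-\partial_x^2)^{-1}$ gives
\[
u_t+(u^2-u_x^2)u_x+(1-\partial_x^2)^{-1}\Bigl(\tfrac23 u_x^3+\partial_x\bigl(\tfrac23u^3+uu_x^2\bigr)+\gamma u_x\Bigr)=0 .
\]
The precise lower-order terms follow from expanding $\bigl((u^2-u_x^2)m\bigr)_x-(u^2-u_x^2)m_x$. The linear dispersion $\gamma u_x$ contributes only the bounded operator $\gamma\,\partial_x(1-\partial_x^2)^{-1}$, which does not affect the estimates.

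For existence, uniqueness and a maximal time $T\in(0,\infty]$ in the class \eqref{eq:2.1}, I invoke \cite{ref8} in $B^s_{2,2}=H^s$. There, transport estimates for $m$ in $B^{s-2}_{p,r}$ with velocity $(u^2-u_x^2)$ require $m\in W^{1,\infty}$-type control of the transport field. This gives the threshold $s>\max\{\tfrac52,2+\tfrac1p\}=\tfrac52$ when $p=2$. Continuity of the data-to-solution map is also part of \cite{ref8}, via the Bona--Smith approximation argument.

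For conservation, I first take data $u_0\in H^{s'}$ with $s'$ large, say $s'\ge 5$, so that all integrations by parts are justified. For $E$, I compute
\[
\tfrac{d}{dt}E(u)=\int u\,m_t\,dx=-\int u\bigl((u^2-u_x^2)m\bigr)_x\,dx-\gamma\int uu_x\,dx .
\]
The last integral vanishes. For the first, integrating by parts gives $\int u_x(u^2-u_x^2)(u-u_{xx})\,dx$, and every term is an exact derivative:
\[
u^3u_x=\bigl(\tfrac{u^4}{4}\bigr)_x,\qquad
u_x^3u_{xx}=\bigl(\tfrac{u_x^4}{4}\bigr)_x,\qquad
-u^2u_xu_{xx}-u_x^3u=-\bigl(\tfrac12u^2u_x^2\bigr)_x .
\]
So $\tfrac{d}{dt}E=0$.

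For $H$, the cleanest route is the bi-Hamiltonian structure $m_t=-(\partial_x-\partial_x^3)\,\delta H/\delta m$ up to sign conventions. Then
\[
\tfrac{d}{dt}H=\int \tfrac{\delta H}{\delta m}\,m_t\,dx=0
\]
by skew-adjointness of $\partial_x-\partial_x^3$. To make this concrete, I compute the variational derivative $\delta H/\delta u=-u^3-uu_x^2+(u^2u_x)_x-\tfrac13(u_x^3)_x-\gamma u$. I then check directly that $u_t=-(1-\partial_x^2)^{-1}\partial_x\,\delta H/\delta u$, with sign chosen to match \eqref{eq:1.1}, reproduces the transport form above. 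Skew-adjointness of $(1-\partial_x^2)^{-1}\partial_x$ then gives $\tfrac{d}{dt}H=0$. This algebraic verification is the main obstacle. It is routine but error-prone, because the mixed term $-2u^2u_x^2$ and the quartic term $\tfrac13u_x^4$ must combine exactly with the cubic nonlinearity $\bigl((u^2-u_x^2)m\bigr)_x$.

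Finally, I pass to general $s>\tfrac52$ by density and the continuity of the solution map. Since $H^s\hookrightarrow X$ continuously for $s>\tfrac52$, both $E$ and $H$ are continuous on $H^s$. Approximating $u_0$ by smooth data, conservation on $[0,T')$ for every $T'<T$ passes to the limit.
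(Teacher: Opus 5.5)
Your route matches the paper's. The paper simply quotes \cite[Theorem~3.1 and Remark~3.1]{ref8} for local well-posedness and continuous dependence, and attributes conservation of $E$ and $H$ to the Hamiltonian identities. Your explicit computation of $\frac{d}{dt}E=0$ is correct, and so is your use of the Hamiltonian form for $H$.

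One formula needs fixing: your displayed nonlocal form of \eqref{eq:1.1} has the wrong $u_x^3$ terms. The commutator $[1-\partial_x^2,\,u^2-u_x^2]\,u_x=-(u^2-u_x^2)_{xx}u_x-2(u^2-u_x^2)_xu_{xx}$ contains $u_x^2u_{xxx}$, so it is not lower order. Carrying it through gives
\[
u_t+\Bigl(u^2-\tfrac13u_x^2\Bigr)u_x+\partial_x(1-\partial_x^2)^{-1}\Bigl(\tfrac23u^3+uu_x^2+\gamma u\Bigr)+(1-\partial_x^2)^{-1}\Bigl(\tfrac13u_x^3\Bigr)=0 .
\]
Checking the Hamiltonian flow against your version would therefore not close. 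The efficient check is at the level of $m$. Since $(u^2-u_x^2)m=u^3+uu_x^2+\partial_x\bigl(\tfrac13u_x^3-u^2u_x\bigr)$, one has exactly $\delta H/\delta u=-(u^2-u_x^2)m-\gamma u$. Hence $m_t=\partial_x\,\delta H/\delta u$, with a plus sign, and $u_t=\partial_x(1-\partial_x^2)^{-1}\delta H/\delta u$; skew-adjointness then gives $\frac{d}{dt}H=0$.

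Your density step also tacitly uses that the smooth approximating solutions remain smooth on $[0,T']$. This is true by the regularity-independent blow-up criterion in \cite{ref8}, but the step is unnecessary. For $s>\frac52$ you already have $u\in C^1([0,T);H^{s-1})$ with $H^{s-1}\hookrightarrow W^{1,\infty}$, both $E$ and $H$ are $C^1$ on $H^{s-1}$, and $\delta H/\delta u\in L^2$. So both conservation computations are legitimate directly at the given regularity.
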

The local well-posedness assertion follows from \cite[Theorem 3.1 and Remark 3.1]{ref8}. Conservation for strong solutions follows from the Hamiltonian identities, and is the conservation framework used in \cite{ref17,ref18,ref31}. Since $s>\frac{5}{2}$, Sobolev embedding gives $H^s(\R)\hookrightarrow W^{1,\infty}(\R)$; hence \eqref{eq:2.1} implies
\begin{equation}\label{eq:2.2}
u\in C([0,T);X).
\end{equation}

A direct Euler--Lagrange calculation gives
\begin{equation}\label{eq:2.6}
Q_c'(u)=(c-u^2+u_x^2)(u-u_{xx})-\gamma u.
\end{equation}
Thus a smooth solitary-wave profile $\phi=\phi_c$ satisfies
\begin{equation}\label{eq:2.7}
(c-\phi^2+\phi_x^2)(\phi-\phi_{xx})-\gamma\phi=0.
\end{equation}
The phase-plane analysis in \cite{ref17,ref31} gives the first integral
\begin{equation}\label{eq:2.8}
\phi_x^2=\phi^2-c+\sqrt{c^2-2\gamma\phi^2}.
\end{equation}
The positive profile is even, strictly decreasing for $x>0$, and decays exponentially together with its derivatives; these properties follow from the phase-plane description and the linearization at the zero equilibrium, and are standard in the solitary-wave construction in \cite{ref17,ref22,ref31}.
Define
\begin{equation}\label{eq:2.9}
A_c(x):=c-\phi^2+\phi_x^2.\end{equation}
By \eqref{eq:2.8},
\begin{equation}\label{eq:2.10}
A_c=\sqrt{c^2-2\gamma\phi^2}.
\end{equation}
At the crest $x=0$, $\phi_x(0)=0$, and \eqref{eq:2.8} yields
\begin{equation}\label{eq:2.11}
\phi^2(0)=2(c-\gamma).
\end{equation}
Since $\phi(0)$ is the maximum of the positive profile,
\begin{equation}\label{eq:2.12}
A_c(x)\geq2\gamma-c>0
\end{equation}
for every $\gamma<c<2\gamma$.
The conserved momentum evaluated on the solitary wave is
\begin{equation}\label{eq:2.13}
E(\phi_c)=\sqrt{c(c-\gamma)}+\frac\gamma2\log\frac{(\sqrt{c-\gamma}+\sqrt c)^2}{\gamma}.
\end{equation}
Differentiating $E(\phi_c)$ gives the useful slope identity
\begin{equation}\label{eq:2.14}
\frac{d}{dc}E(\phi_c)=\sqrt{\frac{c}{c-\gamma}}>0.
\end{equation}
The two identities that will drive the stability analysis are therefore the uniform positivity of $A_c$, which controls the principal part of the Hessian, and the positive momentum slope. Their nonlinear significance will emerge in the exact factorization and the concentration-coercivity principle of Section~\ref{sec:nonlinear}.

\section{Spectral analysis and linear coercivity}\label{sec:spectral}
In this section, we establish the spectral properties of $L_c$ and the coercivity estimates needed for the nonlinear argument. Throughout this section, $\gamma<c<2\gamma$.
\subsection{The linearized operator and its spectrum}
Linearizing \eqref{eq:2.6} gives
\begin{equation}\label{eq:3.1}
L_cv=A_c(v-v_{xx})+(-2\phi v+2\phi_xv_x)(\phi-\phi_{xx})-\gamma v.
\end{equation}
Since
\begin{equation}\label{eq:3.2}
A_c'=-2\phi_x(\phi-\phi_{xx}),
\end{equation}
we can write
\begin{equation}\label{eq:3.3}
L_c=-\partial_x(A_c\partial_x)+V_c,
\end{equation}
where
\begin{equation}\label{eq:3.4}
V_c=A_c-2\phi(\phi-\phi_{xx})-\gamma.
\end{equation}
From \eqref{eq:2.7} and \eqref{eq:2.10}, we obtain
\begin{equation}\label{eq:3.5}
\phi-\phi_{xx}=\frac{\gamma\phi}{A_c},\qquad\phi^2=\frac{c^2-A_c^2}{2\gamma},
\end{equation}
and therefore
\begin{equation}\label{eq:3.6}
V_c=2A_c-\gamma-\frac{c^2}{A_c}.
\end{equation}
In particular,
\begin{equation}\label{eq:3.7}
A_c(x)\to c,\qquad V_c(x)\to c-\gamma\quad(|x|\to\infty).
\end{equation}

\begin{lemma}\label{lem:selfadjoint}
For every fixed $\gamma<c<2\gamma$, the quadratic form
\begin{equation}\label{eq:3.8}
q_c[v]=\int_\R(A_c|v_x|^2+V_c|v|^2)\,dx,\qquad D(q_c)=H^1(\R),
\end{equation}
is closed and bounded from below. It determines a unique self-adjoint operator, again denoted by $L_c$, with
\begin{equation}\label{eq:3.9}
D(L_c)=H^2(\R).
\end{equation}
Moreover,
\begin{equation}\label{eq:3.10}
\sigma_{\ess}(L_c)=[c-\gamma,\infty).
\end{equation}
\end{lemma}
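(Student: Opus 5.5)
We will treat $q_c$ as a perturbation of the form of a uniformly elliptic divergence-form operator and then verify the three claims in turn.

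\emph{Closedness and semiboundedness.} By \eqref{eq:2.10} and \eqref{eq:2.12}, the coefficient $A_c$ is smooth and satisfies
\[
2\gamma-c\le A_c(x)\le c .
\]
By \eqref{eq:3.6}, $V_c$ is smooth and bounded, since $A_c$ stays away from zero. Hence
\[
(2\gamma-c)\norm{v_x}_{L^2}^2-\norm{V_c}_{L^\infty}\norm{v}_{L^2}^2\le q_c[v]\le C\norm{v}_{H^1}^2 .
\]
So $q_c+M$ is equivalent to the $H^1$ norm squared for $M$ large. The form is therefore bounded below and closed on $H^1(\R)$. The representation theorem (Kato/Friedrichs) then yields a unique self-adjoint operator $L_c$ associated with $q_c$.

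\emph{Domain.} For $u\in D(L_c)$ with $L_cu=f$, we have the weak equation
\[
-(A_cu_x)_x=f-V_cu\in L^2 .
\]
Thus $A_cu_x\in H^1$. Since $A_c\in W^{1,\infty}$ with $1/A_c\in W^{1,\infty}$ (all derivatives of $\phi$ are bounded by the exponential decay), we get $u_x\in H^1$, i.e.\ $u\in H^2$. Conversely, for $u\in H^2$ an integration by parts shows $q_c(u,v)=\ip{-(A_cu_x)_x+V_cu}{v}$ for all $v\in H^1$. Hence $H^2\subset D(L_c)$, and $L_c$ acts by \eqref{eq:3.3}, which agrees with \eqref{eq:3.1} by \eqref{eq:3.2}.

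\emph{Essential spectrum.} Compare $L_c$ with the constant-coefficient operator
\[
L_\infty=-c\,\partial_x^2+(c-\gamma),\qquad D(L_\infty)=H^2,
\]
whose spectrum is $[c-\gamma,\infty)$ by Fourier transform. Write
\[
L_c-L_\infty=-\partial_x\bigl((A_c-c)\partial_x\bigr)+\bigl(V_c-(c-\gamma)\bigr).
\]
By \eqref{eq:3.7} and exponential decay, the coefficients $a:=A_c-c$, $a'$ and $b:=V_c-(c-\gamma)$ are smooth and decay exponentially. We show the resolvent difference $(L_c+M)^{-1}-(L_\infty+M)^{-1}$ is compact for large $M$, and then apply Weyl's theorem. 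Via the resolvent identity this difference equals
\[
-(L_c+M)^{-1}(L_c-L_\infty)(L_\infty+M)^{-1}.
\]
Both resolvents map $L^2\to H^2$ boundedly, by the domain step. The operator $L_c-L_\infty=-a\partial_x^2-a'\partial_x+b$ maps $H^2\to L^2$ with decaying coefficients. Multiplication by a decaying bounded function composed with the embedding $H^1\to L^2$ (or $H^2\to H^1$) is compact, by Rellich on bounded intervals plus the tail smallness. Hence $(L_c-L_\infty)(L_\infty+M)^{-1}$ is compact on $L^2$, and the resolvent difference is compact. Weyl's theorem gives $\sigma_{\ess}(L_c)=\sigma_{\ess}(L_\infty)=[c-\gamma,\infty)$.

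The main obstacle is this compactness step. The second-order part $-a\partial_x^2$ is not relatively compact by a mere multiplication argument unless one uses the decay of $a$. I would handle it by approximating $a$ and $b$ with compactly supported cutoffs $\chi_R a$, $\chi_R b$. The truncated pieces are compact by Rellich, since $\chi_R\,\partial_x^2(L_\infty+M)^{-1}$ factors through $H^2\to L^2$ localized on $[-R,R]$; still, I would double-check this. Because only weighted $L^2$ control of $\partial_x^2 u$ is available there, it may be cleaner to work at the form level: show $q_c-q_\infty$ is form-compact relative to $q_\infty$, using $\chi_R a\,u_xv_x$ with $H^1\to L^2$ local compactness for $u_x$ via $H^2$ bounds. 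The errors are small in operator norm as $R\to\infty$, uniformly from $\sup_{|x|>R}(|a|+|a'|+|b|)\to0$. Norm limits of compact operators are compact, which closes the argument.
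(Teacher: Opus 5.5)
Your closedness and domain steps are correct and essentially the paper's; getting $u\in H^2$ from $A_cu_x\in H^1$ and $1/A_c\in W^{1,\infty}$ is a clean variant. The gap is in the essential-spectrum step. Your claim that $(L_c-L_\infty)(L_\infty+M)^{-1}$ is compact on $L^2$ is false, because of the second-order term. Write $a=A_c-c$. The operator $\partial_x^2(L_\infty+M)^{-1}$ is the Fourier multiplier $-\xi^2/(c\xi^2+c-\gamma+M)=-\frac1c+O(\langle\xi\rangle^{-2})$. Hence $-a\,\partial_x^2(L_\infty+M)^{-1}$ equals multiplication by $a/c$ plus a compact operator. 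Multiplication by the nonzero function $a$ is not compact (here $a<0$ everywhere, since $\phi>0$). Concretely, take $f_n=\psi e^{inx}$ with $0\neq\psi\in C_c^\infty$. Then $f_n$ is weakly null in $L^2$, yet $\norm{a\,\partial_x^2(L_\infty+M)^{-1}f_n}_2\to\norm{a\psi}_2/c\neq0$.

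The same sequence defeats both repairs you sketch. First, $\chi_R\,\partial_x^2(L_\infty+M)^{-1}$ is not compact: the two derivatives use up all of the $H^2$ regularity, leaving nothing for Rellich. Second, $q_c-q_\infty$ is not relatively form-compact with respect to $q_\infty$: for $H^1$-normalized oscillating bumps, $\int a|u_x|^2$ does not tend to zero. Your Rellich-plus-tail argument only disposes of the $a'\partial_x$ and $b$ terms.

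The Weyl-theorem route can still be rescued. The resolvent difference itself is compact, even though neither relative operator-compactness nor relative form-compactness holds. The point is to put one derivative on each resolvent. For $f,g\in L^2$, set $u=(L_\infty+M)^{-1}f$ and $w=(L_c+M)^{-1}g$. The form representation and self-adjointness give
\[
\ip{[(L_\infty+M)^{-1}-(L_c+M)^{-1}]f}{g}=\int_\R a\,u_x\overline{w_x}\,dx+\int_\R b\,u\,\overline{w}\,dx.
\]
So the difference is $(\partial_x(L_c+M)^{-1})^*\,a\,\partial_x(L_\infty+M)^{-1}$ plus an analogous $b$-term. Now $a\,\partial_x(L_\infty+M)^{-1}$ is compact, since $\partial_x(L_\infty+M)^{-1}$ maps $L^2$ boundedly into $H^1$ (cutoff, Rellich, tail smallness), and $\partial_x(L_c+M)^{-1}$ is bounded. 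This is what your remark about local compactness of $u_x$ via $H^2$ bounds has to amount to.

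The paper avoids this issue altogether; it explicitly declines to treat the variable leading coefficient as a compact perturbation. It uses Persson's formula for $\inf\sigma_{\ess}(L_c)$, and spreading, translated plane waves as Weyl sequences for each $\lambda\geq c-\gamma$. The repaired resolvent argument instead gives the whole essential spectrum at once via Weyl's stability theorem.
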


\begin{proof}
By \eqref{eq:2.12}, $A_c\geq a_c:=2\gamma-c>0$. The smoothness and exponential decay of $\phi$ imply
\[A_c,A_c',V_c\in L^\infty(\R),\qquad A_c^{-1}\in L^\infty(\R).\]
Hence $q_c[v]\geq a_c\norm{v_x}_2^2-\norm{V_c^-}_\infty\norm{v}_2^2$, and 
after adding a sufficiently large multiple of $\norm{v}_2^2$ the form norm is equivalent to the $H^1$ norm. 
Thus the form is closed and semibounded, and the representation theorem yields a unique self-adjoint operator; 
see, for example, Kato \cite{ref16}.

If $v\in D(L_c)$, then $-\partial_x(A_cv_x)+V_cv=f$ for some $f\in L^2$. Expanding the derivative gives $-A_cv_{xx}=f+A_c'v_x-V_cv$. The right-hand side belongs to $L^2$, and $A_c^{-1}\in L^\infty$, so $v_{xx}\in L^2$. Hence $v\in H^2$. The reverse inclusion follows directly from \eqref{eq:3.3}; this proves \eqref{eq:3.9}.

Finally, by exponential decay,
\[A_c-c\to0,\qquad A_c'\to0,\qquad V_c-(c-\gamma)\to0\quad(|x|\to\infty).\]
We determine the essential spectrum without treating the variable leading coefficient as a compact operator perturbation. By Persson's theorem \cite{ref27,ref28} and \eqref{eq:3.8},
\[\inf\sigma_{\ess}(L_c)=\lim_{R\to\infty}\inf_{\substack{v\in C_c^\infty(\R\setminus[-R,R])\\\norm{v}_2=1}}q_c[v].\]
Given $\varepsilon>0$, for $R$ sufficiently large one has $A_c(x)\geq c-\varepsilon$ and $V_c(x)\geq c-\gamma-\varepsilon$ for $|x|\geq R$, and therefore
\[q_c[v]\geq(c-\varepsilon)\norm{v_x}_2^2+(c-\gamma-\varepsilon)\norm{v}_2^2\]
for all such $v$. Hence $\inf\sigma_{\ess}(L_c)\geq c-\gamma$.
Conversely, fix $\lambda\geq c-\gamma$ and set $k=\sqrt{\frac{\lambda-c+\gamma}{c}}$. Choose $\chi\in C_c^\infty((-1,1))$ with $\norm{\chi}_2=1$, and let
\[\psi_n(x)=\ell_n^{-1/2}\chi\left(\frac{x-x_n}{\ell_n}\right)e^{ikx},\qquad x_n\to+\infty,\quad\ell_n\to\infty,\quad\ell_n=o(x_n).\]
Then $\norm{\psi_n}_2=1$ and $\psi_n\rightharpoonup0$ in $L^2$. On the support of $\psi_n$ the coefficients $A_c,A_c'$ and $V_c$ converge uniformly to $c,0$ and $c-\gamma$, respectively. A direct differentiation therefore gives $\norm{(L_c-\lambda)\psi_n}_2\to0$. The Weyl criterion \cite{ref27,ref28} yields $\lambda\in\sigma_{\ess}(L_c)$. Since $\lambda\geq c-\gamma$ was arbitrary, \eqref{eq:3.10} follows.
\end{proof}
Translation invariance of functional $Q_c$ yields
\begin{equation}\label{eq:3.11}
L_c\phi_x=0.
\end{equation}

\begin{proposition}\label{prop:spectrum}
For every $\gamma<c<2\gamma$, the operator $L_c$ has exactly one simple negative eigenvalue and
\begin{equation}\label{eq:3.12}
\ker L_c=\Span\{\phi_x\}.
\end{equation}
\end{proposition}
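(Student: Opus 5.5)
We would use Sturm--Liouville oscillation theory for the operator $L_c=-\partial_x(A_c\partial_x)+V_c$ on the line. By Lemma~\ref{lem:selfadjoint}, the principal coefficient $A_c$ is uniformly positive and bounded, and $V_c$ is bounded. The spectrum below $c-\gamma$ is therefore discrete, consisting of finitely many simple eigenvalues. Finiteness follows because $V_c-(c-\gamma)$ decays exponentially. Simplicity follows because any two $L^2$ eigenfunctions for the same $\lambda<c-\gamma$ decay exponentially. Their weighted Wronskian $A_c(f g'-f'g)$ is then constant and vanishes at infinity, so $f$ and $g$ are linearly dependent. The $n$-th eigenfunction, ordered increasingly, has exactly $n$ zeros. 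This follows from the standard Sturm comparison argument applied to the equation $-(A_cv')'+V_cv=\lambda v$, or equivalently from the Prüfer angle for the system $(v, A_cv')$.

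With this in place, the key input is \eqref{eq:3.11}: $\phi_x\in H^2$ satisfies $L_c\phi_x=0$, and $0<c-\gamma$ lies below the essential spectrum. Since the positive profile is even and strictly decreasing on $(0,\infty)$, $\phi_x$ is odd with $\phi_x<0$ on $(0,\infty)$ and $\phi_x>0$ on $(-\infty,0)$. Its only zero is therefore $x=0$. Strict monotonicity gives $\phi_x\neq0$ for $x\neq0$; at the crest, $\phi_{xx}(0)\neq0$ follows from \eqref{eq:2.7} and \eqref{eq:3.5}, since $\phi_{xx}(0)=\phi(0)(1-\gamma/A_c(0))$ and $A_c(0)=2\gamma-c<\gamma$. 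So $\phi_x$ has exactly one zero, and by oscillation theory $0$ is the second eigenvalue $\lambda_1$. Consequently there is exactly one eigenvalue $\lambda_0<0$, which is simple with a positive eigenfunction. The zero eigenvalue is also simple, which gives $\ker L_c=\Span\{\phi_x\}$.

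As a sanity check independent of oscillation counting, existence of a negative direction follows from $\ip{L_c\phi}{\phi}<0$. By homogeneity, $Q_c'(\phi)=0$ and the different degrees of the terms of $Q_c$ give $\ip{L_c\phi}{\phi}=\frac{d^2}{ds^2}Q_c(s\phi)|_{s=1}$. This equals $-3\int(\phi^4+2\phi^2\phi_x^2-\frac13\phi_x^4)\,dx$ after using $\ip{Q_c'(\phi)}{\phi}=0$ to eliminate the quadratic part. Its sign would still need checking via \eqref{eq:2.8}, so we rely on the oscillation argument rather than on this computation.

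The main obstacle is justifying the oscillation theorem in this whole-line, variable-coefficient setting: specifically, that the number of zeros of an $L^2$ eigenfunction below $\inf\sigma_{\ess}$ equals its index. We would handle this by the Prüfer transformation with $A_c^{-1}\in L^\infty$. An alternative is to approximate by Dirichlet problems on $[-R,R]$, whose eigenvalues converge to the discrete eigenvalues of $L_c$ by min--max and exponential decay. Exact zero counts pass to the limit because the eigenfunctions have simple zeros and converge in $C^1_{loc}$, and no zeros escape to infinity because the eigenfunctions are of one sign for large $|x|$.
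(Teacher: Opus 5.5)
Your proposal is correct and follows the same route as the paper: $\phi_x\in\ker L_c$ has exactly one zero and $0<c-\gamma=\inf\sigma_{\ess}(L_c)$, so Sturm oscillation makes $0$ the second eigenvalue, which leaves exactly one simple negative eigenvalue and gives $\ker L_c=\Span\{\phi_x\}$ by simplicity; you additionally spell out the justifications the paper leaves to a citation (Wronskian simplicity, Pr\"ufer or Dirichlet approximation for the whole-line zero count). In your side remark the constant should be $-2$, not $-3$, since $\frac{d^2}{ds^2}Q_c(s\phi)|_{s=1}=12Q_4+2Q_2=8Q_4$ with $Q_4,Q_2$ the quartic and quadratic parts of $Q_c(\phi)$; the sign is indeed negative, because \eqref{eq:2.8} gives $\phi_x^2\le(1-\gamma/c)\phi^2$.
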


\begin{proof}
By Lemma~\ref{lem:selfadjoint}, $0<c-\gamma=\inf\sigma_{\ess}(L_c)$, so the zero eigenvalue is discrete. The eigenfunction $\phi_x$ has exactly one zero because $\phi$ is even and strictly decreasing on $(0,\infty)$. The Sturm oscillation theorem for one-dimensional uniformly elliptic Sturm--Liouville operators therefore identifies $0$ as the second discrete eigenvalue counted from below. Hence there is exactly one negative eigenvalue. Simplicity of one-dimensional eigenvalues gives \eqref{eq:3.12}; see \cite[Chs.~5--6]{ref28}.
\end{proof}

We next establish differentiation of the solitary-wave branch on wave speed $c$. Let
\[H_{even}^2=\{u\in H^2(\R):u(-x)=u(x)\},\qquad L_{even}^2=\{u\in L^2(\R):u(-x)=u(x)\}.\]

\begin{lemma}\label{lem:branch}
For every $c_0\in(\gamma,2\gamma)$ there exists a neighborhood $I$ of $c_0$ such that
\begin{equation}\label{eq:3.13}
c\mapsto\phi_c\in C^1(I;H_{even}^2).
\end{equation}
Moreover,
\begin{equation}\label{eq:3.14}
L_c\partial_c\phi_c+E'(\phi_c)=0.
\end{equation}
\end{lemma}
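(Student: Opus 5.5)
The plan is to apply the implicit function theorem to the profile equation restricted to even functions. Define
\[
G:H^2_{even}\times(\gamma,2\gamma)\to L^2_{even},\qquad G(u,c)=Q_c'(u)=(c-u^2+u_x^2)(u-u_{xx})-\gamma u,
\]
which is well defined since $H^2(\R)\hookrightarrow W^{1,\infty}(\R)$ and $H^2$ is an algebra for the relevant products. Indeed, $u^2,u_x^2\in L^\infty$ and $u-u_{xx}\in L^2$. Evenness is preserved, because $u_x^2$ is even when $u$ is even. The map $G$ is polynomial in $(u,u_x,u_{xx})$ and affine in $c$, so it is $C^\infty$ (in particular $C^1$) jointly in $(u,c)$. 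Its partial derivatives are
\[
\partial_uG(\phi_{c_0},c_0)=L_{c_0},\qquad \partial_cG(u,c)=u-u_{xx}=E'(u).
\]
The identity for $\partial_uG$ comes from \eqref{eq:3.1}. We have $G(\phi_{c_0},c_0)=0$ by \eqref{eq:2.7}.

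The key step is invertibility of $L_{c_0}:H^2_{even}\to L^2_{even}$. By Lemma~\ref{lem:selfadjoint}, $L_{c_0}$ is self-adjoint with domain $H^2$. Its coefficients $A_c,V_c$ are even, so $L_{c_0}$ commutes with reflection and maps $H^2_{even}$ into $L^2_{even}$. By Proposition~\ref{prop:spectrum}, $\ker L_{c_0}=\Span\{\phi_x\}$. Since $\phi_x$ is odd, the restriction to even functions has trivial kernel. Moreover, $0$ lies strictly below $\inf\sigma_{\ess}=c_0-\gamma$ by \eqref{eq:3.10}, so $0$ is an isolated point of the spectrum. Hence $L_{c_0}$ is Fredholm of index zero, and on the invariant subspace $L^2_{even}$ it has a bounded inverse onto $H^2_{even}$. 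The inverse lands in $H^2$ because $D(L_{c_0})=H^2$, with the graph norm equivalent to the $H^2$ norm by the elliptic estimate in the proof of Lemma~\ref{lem:selfadjoint}.

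The implicit function theorem then gives a neighborhood $I$ of $c_0$ and a $C^1$ curve $c\mapsto u(c)\in H^2_{even}$ with $G(u(c),c)=0$ and $u(c_0)=\phi_{c_0}$. The main obstacle is identifying $u(c)$ with the explicit profile $\phi_c$. This matters because the implicit function theorem only gives local uniqueness in $H^2_{even}$. I would first check directly that $c\mapsto\phi_c$ is continuous into $H^2$ near $c_0$. This follows from the phase-plane representation \eqref{eq:2.8}, for instance via the implicit quadrature $x=\int_{\phi}^{\phi(0)}d s/\sqrt{s^2-c+\sqrt{c^2-2\gamma s^2}}$. That formula gives locally uniform-in-$c$ exponential decay, with rate $\sqrt{(c-\gamma)/c}$, and pointwise continuous dependence of $\phi_c,\phi_{c,x},\phi_{c,xx}$ on $c$; dominated convergence then yields $H^2$ continuity. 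Since $\phi_c$ is even and solves $G(\phi_c,c)=0$, for $c$ close to $c_0$ it lies in the uniqueness neighborhood, so $u(c)=\phi_c$.

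Finally, differentiating $G(\phi_c,c)=0$ in $c$, which is legitimate by the chain rule since $G$ is $C^1$ and $c\mapsto\phi_c$ is $C^1$ into $H^2$, gives
\[
L_c\partial_c\phi_c+(\phi_c-\phi_{c,xx})=0.
\]
This is \eqref{eq:3.14}, because $E'(u)=u-u_{xx}$ from \eqref{eq:2.3}.
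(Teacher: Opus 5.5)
Your proposal is correct and follows essentially the same route as the paper: the implicit function theorem for $G$ on $(\gamma,2\gamma)\times H^2_{even}$, invertibility of $L_c:H^2_{even}\to L^2_{even}$ from the odd kernel, the isolation of $0$ and $D(L_c)=H^2$, and then differentiation of the profile equation in $c$. The only difference is how you identify the branch with $\phi_c$. The paper appeals to uniqueness of the positive even decaying solitary wave, whereas you combine local uniqueness from the implicit function theorem with $H^2$-continuity of $c\mapsto\phi_c$ obtained from the quadrature; this is slightly more explicit, since it avoids checking that the implicit-function branch consists of positive solutions.
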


\begin{proof}
Define
\begin{equation}\label{eq:3.15}
G(c,u)=(c-u^2+u_x^2)(u-u_{xx})-\gamma u.
\end{equation}
Since $H^2(\R)\hookrightarrow W^{1,\infty}(\R)$, the map $G:(\gamma,2\gamma)\times H_{even}^2\to L_{even}^2$ is $C^1$. Also $G(c,\phi_c)=0$ and $D_uG(c,\phi_c)=L_c$.
By Proposition~\ref{prop:spectrum}, the full kernel is spanned by the odd function $\phi_x$, hence the restriction of $L_c$ to the even subspace has trivial kernel. Since $0$ is isolated, it belongs to the resolvent set of the restricted self-adjoint operator on $L_{even}^2$. Lemma~\ref{lem:selfadjoint} identifies the operator domain with $H^2$, and the closed graph theorem gives
\[
\norm{L_c^{-1}f}_{H^2}\leq C\norm{f}_2,\qquad f\in L_{even}^2.
\]
Thus $L_c:H_{even}^2\to L_{even}^2$ is a Banach-space isomorphism. The implicit-function theorem gives a local $C^1$ branch of even solutions. Uniqueness of the positive even decaying solitary wave identifies this branch with $c\mapsto\phi_c$. Differentiating $Q_c'(\phi_c)=0$ yields \eqref{eq:3.14}.
\end{proof}

Set
\begin{equation}\label{eq:3.16}
p_c:=E'(\phi_c)=\phi_c-\phi_{c,xx}.
\end{equation}
Since $p_c$ is even and $\phi_x$ is odd, $p_c\perp\phi_x$. Lemma~\ref{lem:branch} yields $L_c^{-1}p_c=-\partial_c\phi_c$.
Together with \eqref{eq:2.14}, the following holds
\begin{equation}\label{eq:3.17}
\ip{L_c^{-1}p_c}{p_c}=-\frac{d}{dc}E(\phi_c)=-\sqrt{\frac{c}{c-\gamma}}<0.
\end{equation}

\subsection{Constrained and penalized coercivity}
Write $q_c[v]=\ip{L_cv}{v}$ when $v\in H^1$ in the form sense.
\begin{proposition}\label{prop:constrained}
There exists $\kappa_c>0$ such that
\begin{equation}\label{eq:3.18}
q_c[v]\geq\kappa_c\norm{v}_{H^1}^2
\end{equation}
for every $v\in H^1(\R)$ satisfying
\begin{equation}\label{eq:3.19}
\ip{v}{\phi_x}=0,\qquad\ip{v}{p_c}=0.
\end{equation}
\end{proposition}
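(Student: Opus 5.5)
The argument has two parts. First I show that $q_c[v]\ge \kappa\norm{v}_2^2$ on the constrained subspace, using the standard Weinstein / Grillakis--Shatah--Strauss reasoning. Then I upgrade this $L^2$ bound to an $H^1$ bound using the uniform ellipticity $A_c\ge 2\gamma-c$.

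\emph{Step 1: $q_c\ge0$ on the constraint set.} Let $\chi$ with $\norm{\chi}_2=1$ be the eigenfunction for the unique negative eigenvalue $-\lambda_0$ from Proposition~\ref{prop:spectrum}. By Lemma~\ref{lem:selfadjoint} and Proposition~\ref{prop:spectrum}, the spectrum of $L_c$ restricted to $\{\chi,\phi_x\}^\perp$ lies in $[\lambda_1,\infty)$ for some $\lambda_1>0$. Here $\lambda_1$ is either the next discrete eigenvalue or $c-\gamma$.

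Put $\eta:=L_c^{-1}p_c=-\partial_c\phi_c$, which lies in $H^2$ and is even, hence orthogonal to $\phi_x$. Decompose $\eta=a\chi+\eta_\perp$. By \eqref{eq:3.17},
\[
0>\ip{L_c\eta}{\eta}=-\lambda_0a^2+q_c[\eta_\perp].
\]

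Now take $v$ satisfying \eqref{eq:3.19} and write $v=b\chi+v_\perp$. The constraint $\ip{v}{p_c}=\ip{v}{L_c\eta}=0$ reads $-\lambda_0ab+\ip{L_cv_\perp}{\eta_\perp}=0$. Cauchy--Schwarz for the nonnegative form $q_c$ on $\{\chi,\phi_x\}^\perp$ then gives
\[
\lambda_0^2a^2b^2\le q_c[v_\perp]\,q_c[\eta_\perp]<q_c[v_\perp]\,\lambda_0a^2.
\]
Since $a\neq0$ (otherwise $q_c[\eta_\perp]<0$, which is impossible), this yields $q_c[v]=-\lambda_0b^2+q_c[v_\perp]\ge0$. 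Moreover, equality forces $v_\perp=0$ and $b=0$, because the inequality above is strict.

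\emph{Step 2: quantitative $L^2$ bound.} I argue by contradiction, and I expect this to be the main technical point. Let
\[
\kappa:=\inf\{q_c[v]:\norm{v}_2=1,\ \eqref{eq:3.19}\}\ge0,
\]
and suppose $\kappa=0$. Take a minimizing sequence $v_n$. The form bound $q_c[v]\ge a_c\norm{v_x}_2^2-C\norm{v}_2^2$ makes $v_n$ bounded in $H^1$, so after passing to a subsequence $v_n\rightharpoonup v$.

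Rather than rely on compactness of $v_n$ itself, I use the finite-dimensional structure. Since $v_n$ has bounded $H^1$ norm, its $L_c$-spectral projection onto $[\lambda_1,\infty)$ is controlled by $q_c$. Concretely, writing $v_n=b_n\chi+w_n$ with $w_n\perp\chi,\phi_x$, we have $q_c[w_n]\ge\lambda_1\norm{w_n}_2^2$. The Step~1 inequality is quantitative:
\[
q_c[v_n]\ge q_c[w_n]\Bigl(1-\frac{q_c[\eta_\perp]}{\lambda_0a^2}\Bigr)=:\theta\, q_c[w_n],
\qquad \theta\in(0,1].
\]
Hence $q_c[w_n]\to0$, so $w_n\to0$ in $L^2$. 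The bound $\lambda_0^2a^2b_n^2\le q_c[w_n]q_c[\eta_\perp]$ then gives $b_n\to0$. This contradicts $\norm{v_n}_2=1$. So in fact no sequence argument is needed: directly,
\[
q_c[v]\ge\theta\lambda_1\norm{w}_2^2
\]
and $b^2\lesssim q_c[w]$, which together give $q_c[v]\ge\kappa\norm{v}_2^2$ with $\kappa>0$.

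\emph{Step 3: $H^1$ upgrade.} For $0<\epsilon<1$,
\[
q_c[v]=\epsilon q_c[v]+(1-\epsilon)q_c[v]\ge \epsilon\bigl(a_c\norm{v_x}_2^2-C\norm{v}_2^2\bigr)+(1-\epsilon)\kappa\norm{v}_2^2,
\]
using $\norm{V_c^-}_\infty\le C$. Choosing $\epsilon$ small so that $(1-\epsilon)\kappa-\epsilon C\ge\kappa/2$ gives \eqref{eq:3.18} with $\kappa_c=\min(\epsilon a_c,\kappa/2)$.
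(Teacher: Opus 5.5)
Your proposal is correct and follows essentially the same route as the paper. Both arguments split $v=b\chi+v_\perp$ along the negative eigenfunction, combine $p_c=L_c\eta$, the negative momentum slope and Cauchy--Schwarz in the positive form to get $q_c[v]\ge\theta\,q_c[v_\perp]$ (your $\theta$ is the paper's $1-\theta_c$), deduce an $L^2$ bound, and upgrade it to $H^1$ using $A_c\ge 2\gamma-c$. Two small points do not affect validity: the contradiction setup at the start of your Step 2 is superfluous, as you note yourself, and the strict inequality in Step 1 requires $q_c[v_\perp]>0$.
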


\begin{proof}
Let $-\lambda_c<0$ be the unique negative eigenvalue, with normalized eigenfunction $\chi_c$. Then
$$
H^1(\R)=\Span\{\phi_x\}\oplus\Span\{\chi_c\}\oplus\mathcal H_+,
$$
which yields $\Span\{\phi_x\}^\perp=\Span\{\chi_c\}\oplus\mathcal H_+$. So $q_c$ is positive on $\mathcal H_+$. Indeed, since $0$ is an isolated eigenvalue and the essential spectrum starts at $c-\gamma>0$,
\begin{equation}\label{eq:3.20}
\dist\bigl(0,\sigma(L_c)\setminus\{0\}\bigr)>0
\end{equation}
on the positive spectral subspace. Hence there exists $\delta_c>0$ such that
\begin{equation}\label{eq:3.21}
q_c[h]\geq\delta_c\norm{h}_2^2,\qquad h\in\mathcal H_+.
\end{equation}
Let $w_c=L_c^{-1}p_c$ and write $w_c=a\chi_c+w_+$, $w_+\in\mathcal H_+$. By \eqref{eq:3.17}, $q_c[w_c]=-\lambda_ca^2+q_c[w_+]<0$. Thus $a\ne0$ and
\begin{equation}\label{eq:3.22}
\theta_c:=\frac{q_c[w_+]}{\lambda_ca^2}\in[0,1).
\end{equation}
For $v$ satisfying \eqref{eq:3.19}, write $v=b\chi_c+v_+$, $v_+\in\mathcal H_+$. Since $p_c=L_cw_c$,
\[0=\ip{p_c}{v}=-\lambda_cab+\ip{L_cw_+}{v_+}.\]
Cauchy--Schwarz in the positive form inner product gives $\lambda_c^2a^2b^2\leq q_c[w_+]q_c[v_+]$, and therefore
\begin{equation}\label{eq:3.23}
\lambda_cb^2\leq\theta_cq_c[v_+].
\end{equation}
Hence $q_c[v]=-\lambda_cb^2+q_c[v_+]\geq(1-\theta_c)q_c[v_+]$. It follows from \eqref{eq:3.21} and \eqref{eq:3.23} that
\begin{equation}\label{eq:3.24}
q_c[v]\geq\eta_c\norm{v}_2^2
\end{equation}
for some $\eta_c>0$. On the other hand, uniform ellipticity and boundedness of $V_c$ give
\begin{equation}\label{eq:3.25}
q_c[v]\geq(2\gamma-c)\norm{v_x}_2^2-C_c\norm{v}_2^2.
\end{equation}
A positive linear combination of \eqref{eq:3.24} and \eqref{eq:3.25} yields \eqref{eq:3.18}.
\end{proof}

\begin{proposition}\label{prop:penalized}
There exist $K_c>0$ and $\mu_c>0$ such that
\begin{equation}\label{eq:3.26}
\frac12q_c[v]+K_c\ip{p_c}{v}^2\geq\mu_c\norm{v}_{H^1}^2
\end{equation}
for every $v\in H^1(\R)$ satisfying $\ip{v}{\phi_x}=0$.
\end{proposition}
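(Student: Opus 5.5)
We will deduce the penalized estimate from the constrained coercivity of Proposition~\ref{prop:constrained} by splitting off the $p_c$-direction. Fix $v\in H^1$ with $\ip{v}{\phi_x}=0$. Since $p_c$ is even and $\phi_x$ is odd, we may decompose
\[
v=\alpha\,\tilde p+h,\qquad \tilde p:=\frac{p_c}{\norm{p_c}_2^2},\qquad \alpha:=\ip{v}{p_c}.
\]
Then $\ip{h}{p_c}=0$ and $\ip{h}{\phi_x}=0$, because $\ip{\tilde p}{\phi_x}=0$. Hence Proposition~\ref{prop:constrained} gives $q_c[h]\geq\kappa_c\norm{h}_{H^1}^2$. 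Note that $p_c=\phi_c-\phi_{c,xx}\in H^1$ by smoothness and exponential decay, so $\tilde p\in H^1$ and $q_c[\tilde p]$ is finite.

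Next we expand the quadratic form by bilinearity:
\[
q_c[v]=q_c[h]+2\alpha\,q_c(\tilde p,h)+\alpha^2q_c[\tilde p],
\]
where $q_c(\cdot,\cdot)$ is the associated symmetric form. Boundedness of $A_c$ and $V_c$ gives $|q_c(\tilde p,h)|\leq C_c\norm{\tilde p}_{H^1}\norm{h}_{H^1}$. Young's inequality then yields
\[
\tfrac12 q_c[v]\geq \tfrac12\kappa_c\norm{h}_{H^1}^2-\tfrac14\kappa_c\norm{h}_{H^1}^2-C_c'\alpha^2
=\tfrac14\kappa_c\norm{h}_{H^1}^2-C_c'\alpha^2,
\]
with $C_c'$ depending only on $c$, $\kappa_c$ and $\tilde p$. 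Adding $K_c\alpha^2$ gives
\[
\tfrac12q_c[v]+K_c\ip{p_c}{v}^2\geq\tfrac14\kappa_c\norm{h}_{H^1}^2+(K_c-C_c')\alpha^2.
\]

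Finally we return to $\norm{v}_{H^1}$. We have $\norm{v}_{H^1}^2\leq2\norm{h}_{H^1}^2+2\alpha^2\norm{\tilde p}_{H^1}^2$. Choosing $K_c:=C_c'+1$, this gives \eqref{eq:3.26} with
\[
\mu_c:=\tfrac12\min\Bigl\{\tfrac14\kappa_c,\ \norm{\tilde p}_{H^1}^{-2}\Bigr\}>0.
\]

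The only point needing care is the cross term $q_c(\tilde p,h)$. It must be bounded in the form sense on $H^1$, which holds by Lemma~\ref{lem:selfadjoint}: $A_c,V_c\in L^\infty$. It must also be absorbed by the coercive part. No spectral information beyond Proposition~\ref{prop:constrained} is needed, and the sign condition \eqref{eq:3.17} enters only through that proposition.
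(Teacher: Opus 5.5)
Your proof is correct, but you split $v$ differently from the paper. The paper decomposes along $w_c=L_c^{-1}p_c=-\partial_c\phi_c$ rather than along $p_c$. With $D_c=\ip{p_c}{w_c}=-\sqrt{c/(c-\gamma)}$, it writes $v=v_0+\alpha w_c$ with $\alpha=\ip{p_c}{v}/D_c$. The cross term then vanishes exactly, since $q_c(v_0,w_c)=\ip{v_0}{L_cw_c}=\ip{v_0}{p_c}=0$. Hence $\frac12q_c[v]+K_c\ip{p_c}{v}^2=\frac12q_c[v_0]+\alpha^2D_c^2\bigl(K_c+\frac1{2D_c}\bigr)$, and any $K_c>\frac12\sqrt{(c-\gamma)/c}$ works.

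Your $L^2$-orthogonal splitting along $p_c$ instead leaves a cross term, which you absorb using boundedness of $A_c,V_c$ and Young's inequality. This is the generic ``coercivity on a finite-codimension subspace plus a large penalty'' argument. It uses nothing beyond Proposition~\ref{prop:constrained} and $p_c\in H^1$, and it avoids needing $w_c\in D(L_c)$ or $w_c\perp\phi_x$.

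The price is that your $K_c$ is non-explicit and typically much larger, since it depends on $\kappa_c$, $\norm{A_c}_\infty$, $\norm{V_c}_\infty$ and $\norm{\tilde p}_{H^1}$. The paper's threshold is sharp: taking $v=w_c$ shows that the left side of \eqref{eq:3.26} is negative whenever $K_c<-1/(2D_c)$. Section~\ref{sec:nonlinear} only needs some fixed $K_c$ for which \eqref{eq:3.26} holds, so either version suffices for the rest of the paper.
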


\begin{proof}
Set $D_c:=\ip{p_c}{L_c^{-1}p_c}=-\sqrt{\frac{c}{c-\gamma}}<0$, $w_c=L_c^{-1}p_c$. For $v\perp\phi_x$, define $\alpha=\frac{\ip{p_c}{v}}{D_c}$ and $v_0=v-\alpha w_c$. Then $v_0\perp\phi_x$ and $\ip{p_c}{v_0}=0$. Since $w_c\in D(L_c)=H^2$ and $v_0\in H^1$, polarization of the closed form gives
\[q_c(v_0,w_c)=\ip{v_0}{L_cw_c}=\ip{v_0}{p_c}=0.\]
Thus the quadratic-form cross term in $v=v_0+\alpha w_c$ vanishes, and
\[\frac12q_c[v]+K_c\ip{p_c}{v}^2=\frac12q_c[v_0]+\alpha^2\left(\frac12D_c+K_cD_c^2\right).\]
Choose
\begin{equation}\label{eq:3.27}
K_c>-\frac1{2D_c}=\frac12\sqrt{\frac{c-\gamma}{c}}.
\end{equation}
Proposition~\ref{prop:constrained} controls $v_0$, and the decomposition $v=v_0+\alpha w_c$ then gives \eqref{eq:3.26}.
\end{proof}

\section{Nonlinear coercivity and derivative concentration}\label{sec:nonlinear}
Small perturbations in $X$ need not admit a uniform quadratic expansion of the action after normalization in $H^1$.
In this section, we now quantify the possible derivative concentration and use the resulting estimate to obtain nonlinear Lyapunov coercivity.
\subsection{Exact action expansion and derivative-energy factorization}
The action $Q_c=H+cE$ has the density
\begin{equation}\label{eq:4.1}
Q_c(u)=\int_\R\left[\frac1{12}u_x^4+\frac12(c-u^2)u_x^2+\frac12(c-\gamma)u^2-\frac14u^4\right]\,dx.
\end{equation}
Since $Q_c'(\phi)=0$, expansion at $\phi$ and integration by parts give the exact identity
\begin{equation}\label{eq:4.2}
Q_c(\phi+v)-Q_c(\phi)=\int_\R B_c(x,v,v_x)\,dx+\frac12\int_\R V_cv^2\,dx+R_c(v),\quad v\in X,
\end{equation}
where
\begin{equation}\label{eq:4.3}
B_c(x,v,w)=\frac12(A_c-2\phi v-v^2)w^2+\frac13\phi_xw^3+\frac1{12}w^4
\end{equation}
and
\begin{equation}\label{eq:4.4}
R_c(v)=\int_\R\left[\left(\frac13\phi_{xx}-\phi\right)v^3-\frac14v^4\right]\,dx.
\end{equation}
Indeed, the quadratic mixed term $-2\phi\phi_xvv_x$ contributes $(\phi\phi_x)_xv^2$ after integration by parts, producing the potential $\frac{V_c}{2}$. The remaining mixed cubic term is $-\phi_xv^2v_x$, whose integral equals $\frac13\int\phi_{xx}v^3\,dx$. The identities can first be checked for smooth compactly supported perturbations and then extended by density in $X$. Boundedness of the profile coefficients and $H^1(\R)\hookrightarrow L^\infty(\R)$ yield
\begin{equation}\label{eq:4.5}
|R_c(v)|\leq C\left(\norm{v}_{H^1}^3+\norm{v}_{H^1}^4\right).
\end{equation}

\begin{proposition}\label{prop:factorization}
Define
\begin{equation}\label{eq:4.6}
\omega_c=\frac12A_c-\frac13\phi_x^2.
\end{equation}
Then
\begin{equation}\label{eq:4.7}
\omega_c(x)=\frac12(c-\phi^2(x))+\frac16\phi_x(x)^2\geq \frac{2\gamma-c}{2}>0.
\end{equation}
For every $x,w\in\R$, one has
\begin{equation}\label{eq:4.8}
B_c(x,0,w)=\omega_c(x)w^2+\frac1{12}w^2(w+2\phi_x(x))^2.
\end{equation}
\end{proposition}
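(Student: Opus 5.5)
The statement is purely algebraic, so the plan is a direct verification in two parts: first the formula and lower bound for $\omega_c$, then the polynomial identity in $w$.

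\emph{The formula for $\omega_c$.} Insert the definition \eqref{eq:2.9}, $A_c=c-\phi^2+\phi_x^2$, into \eqref{eq:4.6}:
\[
\omega_c=\tfrac12(c-\phi^2)+\tfrac12\phi_x^2-\tfrac13\phi_x^2=\tfrac12(c-\phi^2)+\tfrac16\phi_x^2 .
\]
This is the first equality in \eqref{eq:4.7}.

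\emph{The lower bound.} Drop the nonnegative term $\frac16\phi_x^2$. Since $\phi$ is positive, even and strictly decreasing on $(0,\infty)$, its maximum is $\phi(0)$. By \eqref{eq:2.11}, $\phi^2(x)\leq\phi^2(0)=2(c-\gamma)$. Hence
\[
\tfrac12(c-\phi^2)\geq\tfrac12\bigl(c-2c+2\gamma\bigr)=\tfrac{2\gamma-c}{2},
\]
which is positive because $c<2\gamma$. An equivalent route is to write $\omega_c=\frac12A_c-\frac13\phi_x^2$ and combine \eqref{eq:2.12} with a bound on $\phi_x^2$; the route above avoids needing any bound on $\phi_x$.

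\emph{The identity \eqref{eq:4.8}.} Set $v=0$ in \eqref{eq:4.3}:
\[
B_c(x,0,w)=\tfrac12A_cw^2+\tfrac13\phi_xw^3+\tfrac1{12}w^4 .
\]
Expand the square on the right of \eqref{eq:4.8}:
\[
\tfrac1{12}w^2(w+2\phi_x)^2=\tfrac1{12}w^4+\tfrac13\phi_xw^3+\tfrac13\phi_x^2w^2 .
\]
The quartic and cubic terms agree with those of $B_c(x,0,w)$. The quadratic coefficients then require $\omega_c+\frac13\phi_x^2=\frac12A_c$, which is exactly the definition \eqref{eq:4.6}. The identity therefore holds pointwise for all $x,w\in\R$.

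There is no genuine obstacle here. The only step using information about the profile is the lower bound, which rests on identifying the crest value $\phi^2(0)=2(c-\gamma)$ from \eqref{eq:2.11} as the maximum of $\phi^2$.
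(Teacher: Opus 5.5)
Your proposal is correct and is essentially the paper's own proof. You substitute $A_c=c-\phi^2+\phi_x^2$ into the definition of $\omega_c$, bound it below using the crest value $\phi^2\le\phi^2(0)=2(c-\gamma)$, and expand $\frac1{12}w^2(w+2\phi_x)^2$ to match the quartic, cubic and quadratic coefficients of $B_c(x,0,w)$.
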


\begin{proof}
The formula for $A_c=c-\phi^2+\phi_x^2$ gives the equality in \eqref{eq:4.7}. The profile bound $\phi^2\leq2(c-\gamma)$ established in Section~\ref{sec:variational} gives its lower bound. Expanding the square in \eqref{eq:4.8} gives
\[\left(\omega_c+\frac13\phi_x^2\right)w^2+\frac13\phi_xw^3+\frac1{12}w^4=\frac12A_cw^2+\frac13\phi_xw^3+\frac1{12}w^4,\]
which is $B_c(x,0,w)$ defined by \eqref{eq:4.3}.
\end{proof}
The two terms in \eqref{eq:4.8} play distinct roles. The first term assigns a positive cost to derivative defects, whereas the second term contributes the additional coefficient $\frac{\phi_x^2}{3}$ on the weak limit and thereby reconstructs the derivative part of the Hessian.

\subsection{A concentration-coercivity principle}
Define the translation-invariant Lyapunov functional
\begin{equation}\label{eq:4.9}
\F_c(u)=Q_c(u)-Q_c(\phi)+K_c\bigl(E(u)-E(\phi)\bigr)^2.
\end{equation}
This functional is conserved along the strong solutions. The next result separates the weak $H^1$-component from the derivative defect. It does not assume tightness of the normalized $H^1$ mass.

\begin{theorem}[Concentration-coercivity principle]\label{thm:concentration}
Let $v_n\to0$ strongly in $X$, and set $r_n=\norm{v_n}_{H^1}>0$, $z_n=\frac{v_n}{\norm{v_n}_{H^1}}$. Suppose that, along a subsequence,
\[z_n\rightharpoonup z\text{ in }H^1(\R),\qquad |z_{n,x}|^2\,dx\rightharpoonup^*d\mu\text{ locally as Radon measures on }\R,\]
where $d\mu$ initially arises as a locally finite nonnegative Radon measure. The uniform $L^2$ bound on $z_{n,x}$ makes it finite globally. Moreover, $d\mu\geq|z_x|^2\,dx$, and
\[d\mu^*:=d\mu-|z_x|^2\,dx\geq0,\qquad d\mu^*(\R):=\sup_{R>0}d\mu^*([-R,R])<\infty.\]
One has
\begin{equation}\label{eq:4.10}
\liminf_{n\to\infty}\frac1{\norm{v_n}_{H^1}^2}\int_\R B_c(x,v_n,v_{n,x})\,dx\geq\frac12\int_\R A_c|z_x|^2\,dx+\int_\R\omega_c\,d\mu^*
\end{equation}
and
\begin{equation}\label{eq:4.11}
\liminf_{n\to\infty}\frac{\F_c(\phi+v_n)}{\norm{v_n}_{H^1}^2}\geq\frac12q_c[z]+K_c\ip{p_c}{z}^2+\int_\R\omega_c\,d\mu^*.
\end{equation}
If additionally $\ip{z_n}{\phi_x}=0$, then
\begin{equation}\label{eq:4.12}
\liminf_{n\to\infty}\frac{\F_c(\phi+v_n)}{\norm{v_n}_{H^1}^2}\geq\mu_c\norm{z}_{H^1}^2+\frac{2\gamma-c}{2}d\mu^*(\R).
\end{equation}
\end{theorem}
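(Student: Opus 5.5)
The plan is to reduce everything to the pointwise factorization of Proposition~\ref{prop:factorization}, after first discarding the $v$-dependent part of $B_c$. Write $r_n=\norm{v_n}_{H^1}$. Then
\[
B_c(x,v_n,v_{n,x})=B_c(x,0,v_{n,x})-\tfrac12(2\phi v_n+v_n^2)v_{n,x}^2 .
\]
Since $\norm{v_n}_\infty\lesssim r_n\to0$, the error term is bounded by $C\norm{v_n}_\infty\norm{v_{n,x}}_2^2=o(r_n^2)$, so it is negligible after division by $r_n^2$. Applying \eqref{eq:4.8},
\[
\frac1{r_n^2}\int B_c(x,0,v_{n,x})\,dx=\int\omega_c|z_{n,x}|^2\,dx+\frac1{12}\int z_{n,x}^2\,(r_nz_{n,x}+2\phi_x)^2\,dx .
\]
Both integrands are nonnegative, so each term can be bounded from below separately.

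\emph{First term.} Use the defect measure. For a cutoff $0\le\chi_R\le1$ supported in $[-2R,2R]$ with $\chi_R=1$ on $[-R,R]$, the weak-$*$ convergence gives $\liminf_n\int\omega_c|z_{n,x}|^2\ge\int\omega_c\chi_R\,d\mu$, because $\omega_c\chi_R$ is continuous, compactly supported and nonnegative. Letting $R\to\infty$ and splitting $d\mu=|z_x|^2dx+d\mu^*$ yields $\int\omega_c|z_x|^2+\int\omega_c\,d\mu^*$. Here $\omega_c\ge\frac{2\gamma-c}{2}$ makes the $\mu^*$ part meaningful. The facts that $d\mu\ge|z_x|^2dx$ and that $\mu$ is finite follow from weak lower semicontinuity of $\int\chi|z_{n,x}|^2$ and the uniform bound $\int|z_{n,x}|^2\le1$.

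\emph{Second term.} This is the main obstacle, since I must extract $\frac13\int\phi_x^2|z_x|^2$ without a defect contribution. I would expand the square and drop nonnegative pieces:
\[
z_{n,x}^2(r_nz_{n,x}+2\phi_x)^2\ \ge\ 4\phi_x^2z_{n,x}^2+4r_n\phi_xz_{n,x}^3 .
\]
The cubic term is controlled by $r_n\norm{\phi_x}_\infty\int|z_{n,x}|^3$. Interpolating between $L^2$ and $L^4$, with $\norm{z_{n,x}}_4=\norm{v_{n,x}}_4/r_n$, gives $r_n\int|z_{n,x}|^3\le r_n\norm{z_{n,x}}_2\norm{z_{n,x}}_4^2\le\norm{v_{n,x}}_4^2/r_n$. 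This does \emph{not} obviously tend to zero, because $\norm{v_n}_{W^{1,4}}$ may be large relative to $r_n$. I would therefore handle the second term differently. By Young's inequality, $\frac1{12}z_{n,x}^2(r_nz_{n,x}+2\phi_x)^2\ge\frac{1}{12}\cdot$ (the integrand localized to the set $\{|r_nz_{n,x}|\le\eta\}$). On that set, $(r_nz_{n,x}+2\phi_x)^2\ge(2|\phi_x|-\eta)_+^2$. On the complementary set $\{|v_{n,x}|>\eta\}$, the $L^2$ mass of $z_{n,x}$ lies where $\phi_x$-weighted terms are dominated by the first term plus the $\frac{1}{12}w^4$-type positivity. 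More simply, I would write the full density as the quadratic form $\frac12A_cw^2+\frac13\phi_xw^3+\frac1{12}w^4$, which for $|w|\le\eta$ is at least $(\frac12A_c-C\eta)w^2$, and for $|w|>\eta$ is at least $\omega_cw^2$. The set $\{|v_{n,x}|>\eta\}$ has $z_{n,x}$-mass that weakly converges, in the sense of measures, to a part of $\mu$ whose absolutely continuous part with respect to $|z_x|^2$ vanishes. This holds because $z_n\mathbf 1_{\{|v_{n,x}|>\eta\}}$ has measure of support $\le r_n^2/\eta^2\to0$, so its weak $L^2$ limit is zero. Hence the weak limit of $z_{n,x}\mathbf 1_{\{|v_{n,x}|\le\eta\}}$ is $z_x$. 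Weak lower semicontinuity of the weighted form $\int(\frac12A_c-C\eta)\,g^2$, applied to $g=z_{n,x}\mathbf 1_{\{|v_{n,x}|\le\eta\}}$, then gives $\int(\frac12A_c-C\eta)|z_x|^2$. The remaining mass contributes at least $\int\omega_c$ against the defect. Letting $\eta\to0$ yields \eqref{eq:4.10}.

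\emph{Deducing \eqref{eq:4.11} and \eqref{eq:4.12}.} For \eqref{eq:4.11}, combine \eqref{eq:4.2} and \eqref{eq:4.5} with the following two facts. First, $\frac12\int V_cz_n^2\to\frac12\int V_cz^2$, by local compactness of $H^1\hookrightarrow L^2_{loc}$ and the splitting $V_c=(c-\gamma)+(V_c-(c-\gamma))$. The constant part is lower semicontinuous and the decaying part converges. Second, $E(\phi+v_n)-E(\phi)=\ip{p_c}{v_n}+E(v_n)$, so $(E(\phi+v_n)-E(\phi))^2/r_n^2\to\ip{p_c}{z}^2$. Summing the liminfs gives $\frac12q_c[z]+K_c\ip{p_c}{z}^2+\int\omega_c\,d\mu^*$. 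Finally, weak convergence preserves $\ip{z}{\phi_x}=0$, so Proposition~\ref{prop:penalized} together with $\omega_c\ge\frac{2\gamma-c}2$ gives \eqref{eq:4.12}.
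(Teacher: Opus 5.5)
\textbf{Gap in the final argument for \eqref{eq:4.10}; the repair uses pieces you already have.}

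Your error-term estimate, your first-term bound $\liminf_n\int\omega_c|z_{n,x}|^2\ge\int\omega_c|z_x|^2+\int\omega_c\,d\mu^*$, and your deductions of \eqref{eq:4.11} and \eqref{eq:4.12} are fine. One small correction: $\frac12\int V_cz_n^2$ need not converge, since $L^2$ mass can escape to infinity, but your splitting gives the $\liminf$ inequality, which is all that is used.

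The gap is in the route you finally commit to for \eqref{eq:4.10}. You drop the truncation of the square term and instead split the \emph{full} density $B_c(x,0,v_{n,x})$ over $G_n^\eta=\{|v_{n,x}|\le\eta\}$ and $C_n^\eta=\{|v_{n,x}|>\eta\}$. You then apply weak lower semicontinuity to $z_{n,x}\mathbf 1_{G_n^\eta}$ with weight $\frac12A_c-C\eta$, and assert that the $C_n^\eta$ part ``contributes at least $\int\omega_c$ against the defect.'' That assertion is false, because the defect need not live on $C_n^\eta$. On $G_n^\eta$ you only know $|z_{n,x}|\le\eta/r_n\to\infty$, so any oscillation or concentration of $z_{n,x}$ at bounded height happens inside $G_n^\eta$. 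Your weak-lsc step discards exactly that part of $\mu^*$.

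Concretely, let $z_n$ be the $H^1$-normalization of $\frac1n\psi(x)\sin(nx)$, with $\psi\in C_c^\infty$ and $\norm{\psi}_2^2=2$, and set $v_n=r_nz_n$ with $r_n\to0$. Then:
\begin{itemize}
\item $v_n\to0$ in $X$, $z=0$, and $\mu^*=\frac12\psi^2\,dx\neq0$;
\item $\norm{v_{n,x}}_\infty\to0$, so $C_n^\eta=\emptyset$ for large $n$;
\item hence your chain yields only the lower bound $0$, while \eqref{eq:4.10} requires at least $\frac{2\gamma-c}{2}\mu^*(\R)>0$.
\end{itemize}
Similarly, $z_{n,x}\mathbf 1_{C_n^\eta}\rightharpoonup0$ does not make the limit of $|z_{n,x}|^2\mathbf 1_{C_n^\eta}\,dx$ free of any part absolutely continuous with respect to $|z_x|^2dx$. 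It only shows that this limit is at most $\mu^*$. Finally, if you add this full-density bound to your first-term bound, $\omega_c|z_{n,x}|^2$ is counted twice.

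The repair is the route you started and then abandoned, which is exactly the paper's proof.
\begin{itemize}
\item Keep your first-term bound over all of $\R$. It already charges the entire defect $\mu^*$ with weight $\omega_c$. So the square term only has to produce the absolutely continuous piece $\frac13\int\phi_x^2|z_x|^2$, and losing its own defect is harmless.
\item On $G_n^\eta$ one has $\frac1{12}|z_{n,x}|^2(v_{n,x}+2\phi_x)^2\ge\frac1{12}(2|\phi_x|-\eta)_+^2\,|z_{n,x}\mathbf 1_{G_n^\eta}|^2$.
\item Since $|C_n^\eta|\le r_n^2/\eta^2\to0$ gives $z_{n,x}\mathbf 1_{G_n^\eta}\rightharpoonup z_x$ in $L^2$, weak lower semicontinuity yields $\frac1{12}\int(2|\phi_x|-\eta)_+^2|z_x|^2$. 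This tends to $\frac13\int\phi_x^2|z_x|^2$ as $\eta\to0$.
\item Adding the two bounds and using $\omega_c+\frac13\phi_x^2=\frac12A_c$ gives \eqref{eq:4.10}.
\end{itemize}

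Your full-density split can also be salvaged. Replace weak lsc on $G_n^\eta$ by the local measure limit $\nu_G\ge|z_x|^2dx$ of $|z_{n,x}|^2\mathbf 1_{G_n^\eta}dx$, and note that $\mu^*=(\nu_G-|z_x|^2dx)+\nu_C$. Then use $\frac12A_c-C\eta\ge\omega_c-C\eta$; this costs at most $C\eta\,\mu(\R)\le C\eta$. As written, however, the defect bookkeeping is wrong.
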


\begin{proof}
The normalization gives $\norm{z_n}_{H^1}=1$. For every nonnegative $\psi\in C_c(\R)$, weak lower semicontinuity applied to $\sqrt\psi\,z_{n,x}$ gives
\[\int\psi|z_x|^2\,dx\leq\liminf_n\int\psi|z_{n,x}|^2\,dx=\int\psi\,d\mu,\]
which yields $d\mu\ge |z_x|^2dx$. To show that $d\mu$ is finite, choose $\chi\in C_c^\infty$ even, with $0\leq\chi\leq1$, $\chi=1$ on $[-1,1]$, and decreasing as a function of $|x|$. Set $\chi_R(x)=\chi(x/R)$, then $\chi_R\rightarrow 1$ as $R\to\infty$, and
\[\int\chi_R\,d\mu=\lim_n\int\chi_R|z_{n,x}|^2\,dx\leq1.\]
By the monotone convergence theorem, $d\mu(\R)=\lim_{R\to\infty}\int \chi_R\,d\mu \leq1$.
Consequently $d\mu^*$ is finite, and continuity from below gives the stated formula for $d\mu^*(\R)$.
Local measure convergence need not preserve total derivative mass: mass escaping to infinity
is not included in $d\mu^*$.

The coefficient error in replacing $v_n$ by zero is exactly
\begin{equation}\label{eq:4.13}
B_c(x,v_n,w)-B_c(x,0,w)=-\left(\phi v_n+\frac12v_n^2\right)w^2.
\end{equation}
Thus
\[\left|\frac1{\norm{v_n}_{H^1}^2}\int\bigl(B_c(x,v_n,v_{n,x})-B_c(x,0,v_{n,x})\bigr)\,dx\right|
\leq\left(\norm{\phi}_\infty\norm{v_n}_\infty+\frac12\norm{v_n}_\infty^2\right)\norm{z_{n,x}}_2^2=o(1).\]
By Proposition~\ref{prop:factorization},
\begin{equation}\label{eq:4.14}
\frac{B_c(x,0,v_{n,x})}{\norm{v_n}_{H^1}^2}=\omega_c|z_{n,x}|^2+\frac1{12}|z_{n,x}|^2(v_{n,x}+2\phi_x)^2.
\end{equation}
Since $\chi_R\omega_c$ is continuous and compactly supported, local measure convergence implies
\[\liminf_n\int\omega_c|z_{n,x}|^2\,dx\geq\int\chi_R\omega_c\,d\mu.\]
By the monotone convergence theorem
\begin{equation}\label{eq:4.15}
\liminf_n\int\omega_c|z_{n,x}|^2\,dx\geq\int\omega_c\,d\mu.
\end{equation}
The passage to the limit uses only compactly supported weights and the nonnegativity of the discarded terms.
For the square term, fix $\delta>0$ and set
\[G_n^\delta=\{|v_{n,x}|\leq\delta\},\qquad C_n^\delta=\R\setminus G_n^\delta,\qquad h_n^\delta=z_{n,x}\mathbf1_{G_n^\delta}.\]
Strong convergence in $X$ gives
\begin{equation}\label{eq:4.16}
|C_n^\delta|\leq\delta^{-4}\norm{v_{n,x}}_4^4\longrightarrow0.
\end{equation}
For every fixed $g\in L^2(\R)$, absolute continuity of its $L^2$ integral and boundedness of $z_{n,x}$ imply
\[\left|\int gz_{n,x}\mathbf1_{C_n^\delta}\,dx\right|\leq\norm{z_{n,x}}_2\norm{g\mathbf1_{C_n^\delta}}_2\longrightarrow0.\]
Hence $h_n^\delta\rightharpoonup z_x$ in $L^2$. On $G_n^\delta$, $|v_{n,x}+2\phi_x|\geq(2|\phi_x|-\delta)_+$, where $t_+=\max\{t,0\}$. With $a_\delta=\frac{(2|\phi_x|-\delta)_+^2}{12}$, multiplication by the bounded function $\sqrt{a_\delta}$ preserves weak $L^2$ convergence, and
\[\liminf_n\frac1{12}\int|z_{n,x}|^2(v_{n,x}+2\phi_x)^2\,dx\geq\liminf_n\int a_\delta|h_n^\delta|^2\,dx\geq\int a_\delta|z_x|^2\,dx.\]
By the monotone convergence theorem, one has
\begin{equation}\label{eq:4.17}
\liminf_n\frac1{12}\int|z_{n,x}|^2(v_{n,x}+2\phi_x)^2\,dx\geq\frac13\int\phi_x^2|z_x|^2\,dx.
\end{equation}
 Since $d\mu=|z_x|^2\,dx+d\mu^*$, \eqref{eq:4.15} and \eqref{eq:4.17}, together with \eqref{eq:4.6}, yield \eqref{eq:4.10}.

 It remains to consider the potential and penalty terms. Write $V_c=V_c^+-V_c^-$, where $V_c^\pm=\max\{\pm V_c,0\}$. The limit $V_c(x)\to c-\gamma>0$ implies that $V_c^-$ vanishes outside a bounded interval. Weak $H^1$ convergence implies strong $L^2$ convergence on each bounded interval by compactness (the unique possible local limit is $z$). Therefore
\[\int V_c^-z_n^2\,dx\longrightarrow\int V_c^-z^2\,dx,\qquad\liminf_n\int V_c^+z_n^2\,dx\geq\int V_c^+z^2\,dx,\]
the second inequality following from weak lower semicontinuity of $\norm{\sqrt{V_c^+}\,z_n}_2^2$. It follows that
\begin{equation}\label{eq:4.18}
\liminf_n\int V_cz_n^2\,dx\geq\int V_cz^2\,dx.
\end{equation}
Since $E(u)=\frac12\norm{u}_{H^1}^2$, the exact energy expansion is
\begin{equation}\label{eq:4.19}
E(\phi+v)-E(\phi)=\ip{p_c}{v}+\frac12\norm{v}_{H^1}^2.
\end{equation}
Here $p_c\in L^2$, so
\begin{equation}\label{eq:4.20}
\frac{K_c}{\norm{v_n}_{H^1}^2}\bigl(E(\phi+v_n)-E(\phi)\bigr)^2=K_c\left(\ip{p_c}{z_n}+\frac{\norm{v_n}_{H^1}}{2}\right)^2\longrightarrow K_c\ip{p_c}{z}^2.
\end{equation}
Also \eqref{eq:4.5} gives $\frac{R_c(v_n)}{\norm{v_n}_{H^1}^2}=o(1)$. Since the potential contribution is bounded, the penalty converges, and the derivative contribution is bounded from below by $-o(1)$, the preceding estimates combine to give \eqref{eq:4.11}. Finally, if $\ip{z_n}{\phi_x}=0$, weak convergence and $\phi_x\in L^2$ give $\ip{z}{\phi_x}=0$. Hence \eqref{eq:3.26} and \eqref{eq:4.7} imply \eqref{eq:4.12}.
\end{proof}
Theorem~\ref{thm:concentration} controls both the weak component and the derivative defect at finite spatial locations. It remains to rule out escape of normalized mass to infinity.

\begin{proposition}\label{cor:nonlinear}
There exist $\rho_c>0$ and $C_c>0$ such that
\begin{equation}\label{eq:4.21}
\F_c(\phi+v)\geq C_c\norm{v}_{H^1}^2,
\end{equation}
whenever $v\in X$, $\norm{v}_X<\rho_c$, and $\ip{v}{\phi_x}=0$.
\end{proposition}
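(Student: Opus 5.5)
We argue by contradiction. If the statement fails, there are $v_n\in X$ with $\norm{v_n}_X\to0$, $\ip{v_n}{\phi_x}=0$, $v_n\neq0$, and $\F_c(\phi+v_n)\le \frac1n\norm{v_n}_{H^1}^2$. Set $r_n=\norm{v_n}_{H^1}$ and $z_n=v_n/r_n$, so that $\norm{z_n}_{H^1}=1$ and $\ip{z_n}{\phi_x}=0$. Passing to a subsequence, $z_n\rightharpoonup z$ in $H^1$ and $|z_{n,x}|^2dx\rightharpoonup^* d\mu$ locally. Theorem~\ref{thm:concentration} then gives
\[0\ge\liminf_n\frac{\F_c(\phi+v_n)}{r_n^2}\ge\mu_c\norm{z}_{H^1}^2+\frac{2\gamma-c}{2}d\mu^*(\R).\]
Hence $z=0$ and $d\mu^*=0$, so $d\mu=0$. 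Thus $z_n\to0$ in $L^2_{loc}$ and $z_{n,x}\to0$ in $L^2_{loc}$, i.e.\ $z_n\to0$ in $H^1([-R,R])$ for every $R$. The normalized mass must therefore escape to infinity: for every $R$, $\norm{z_n}_{H^1(|x|>R)}\to1$. The rest of the proof derives a contradiction from this escape, using far-field positivity.

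\textbf{Step 1: lower bound for the energy density.} Revisit the pieces of the expansion \eqref{eq:4.2} divided by $r_n^2$:
\[\frac{\F_c(\phi+v_n)}{r_n^2}\ge\frac1{r_n^2}\int B_c(x,v_n,v_{n,x})\,dx+\frac12\int V_cz_n^2\,dx+o(1),\]
where we drop the nonnegative penalty and use \eqref{eq:4.5} for $R_c$. By \eqref{eq:4.13} and \eqref{eq:4.8}, the derivative part is at least $\int\omega_c|z_{n,x}|^2dx-o(1)$, since the square term is nonnegative. Combining,
\[\frac{\F_c(\phi+v_n)}{r_n^2}\ge\int\Bigl(\omega_c|z_{n,x}|^2+\tfrac12V_cz_n^2\Bigr)dx-o(1).\]

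\textbf{Step 2: split into a bounded region and the far field.} Fix $R$ so large that $V_c\ge\frac{c-\gamma}{2}$ for $|x|\ge R$; this is possible by \eqref{eq:3.7}. Recall also $\omega_c\ge\frac{2\gamma-c}{2}$ everywhere by \eqref{eq:4.7}.
\begin{itemize}
\item On $[-R,R]$, local strong convergence $z_n\to0$ in $L^2$ gives $\int_{-R}^R V_c^-z_n^2\to0$. The remaining contributions there are nonnegative.
\item On $|x|>R$, the integrand is at least $\min\{\frac{2\gamma-c}{2},\frac{c-\gamma}{4}\}\bigl(|z_{n,x}|^2+z_n^2\bigr)$.
\end{itemize}
Hence the liminf is at least $\min\{\frac{2\gamma-c}{2},\frac{c-\gamma}{4}\}\cdot\lim\norm{z_n}^2_{H^1(|x|>R)}=\min\{\frac{2\gamma-c}{2},\frac{c-\gamma}{4}\}>0$, contradicting $\F_c(\phi+v_n)\le r_n^2/n$. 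This is the promised far-field positivity.

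This is cleaner than tracking a non-tight mass via cutoffs. Indeed, the crude pointwise bound $\omega_c\ge\frac{2\gamma-c}{2}$ suffices away from the profile, and the only negative term, $V_c^-$, is compactly supported and killed by local compactness. No cutoff-commutator estimates are needed, since we work with the densities directly rather than splitting $z_n$ into pieces.

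\textbf{Main obstacle.} The delicate point is the claim that $z=0$ and $d\mu^*=0$ force $\int_{-R}^R V_c^-z_n^2\to0$, together with uniformity of the error terms. This rests on three facts:
\begin{itemize}
\item The $L^2_{loc}$ compactness of $H^1$-bounded sequences, which needs only $z=0$ (the vanishing of $d\mu^*$ is then not even required).
\item The $o(1)$ bounds from \eqref{eq:4.13} and \eqref{eq:4.5}, which need $\norm{v_n}_\infty\to0$; this follows from $\norm{v_n}_{H^1}\to0$.
\item The lower bound \eqref{eq:4.7} for $\omega_c$, which holds globally.
\end{itemize}
Consequently, the full Theorem~\ref{thm:concentration} is used only to conclude $z=0$. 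The constant $C_c$ is not explicit, since it comes from the contradiction argument, and $\rho_c$ is the radius below which no such sequence exists.
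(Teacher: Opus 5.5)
Your proof is correct and follows the paper's argument essentially step for step. You argue by contradiction and use Theorem~\ref{thm:concentration} to force $z=0$ and $d\mu^*=0$. You then use the global bound $\omega_c\ge\frac{2\gamma-c}{2}$ on the derivative energy and far-field positivity of $V_c$, with the compactly supported $V_c^-$ contribution removed by local $L^2$ compactness.

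One small inconsistency: your Step 2 discards the derivative energy on $[-R,R]$. It therefore does use $d\mu=0$, to get $\norm{z_n}_{H^1(|x|>R)}\to1$. Your closing remark that only $z=0$ is needed becomes true if, as the paper does, you keep the global term $\frac{2\gamma-c}{2}\norm{z_{n,x}}_2^2$ and split only the potential.
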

\begin{proof}
Suppose the assertion fails. Choosing both the proposed radius and coercivity constant to be $\frac{1}{n}$, we obtain nonzero $v_n\in X$ with
\[\norm{v_n}_X<\frac{1}{n},\qquad\ip{v_n}{\phi_x}=0,\qquad\frac{\F_c(\phi+v_n)}{\norm{v_n}_{H^1}^2}<\frac{1}{n}.\]
Set $z_n=\frac{v_n}{\norm{v_n}_{H^1}}$, then
\begin{equation}\label{eq:4.22}
\norm{z_n}_{H^1}=1,\qquad\limsup_n\frac{\F_c(\phi+v_n)}{\norm{v_n}_{H^1}^2}\leq0.
\end{equation}
By weak compactness in $H^1$ and local weak-* compactness of the derivative measures, after extraction
\[z_n\rightharpoonup z\text{ in }H^1,\qquad |z_{n,x}|^2\,dx\rightharpoonup^*d\mu\text{ locally}.\]
Since $\phi_x\in L^2$, we have \(\langle z,\phi_x\rangle=0\). By Theorem~\ref{thm:concentration},
\[0\geq\limsup_n\frac{\F_c(\phi+v_n)}{\norm{v_n}_{H^1}^2}\geq\liminf_n\frac{\F_c(\phi+v_n)}{\norm{v_n}_{H^1}^2}\geq\mu_c\norm{z}_{H^1}^2+\frac{2\gamma-c}{2}d\mu^*(\R)\geq0.\]
Consequently,
\begin{equation}\label{eq:4.23}
\frac{\F_c(\phi+v_n)}{r_n^2}\longrightarrow0,\qquad z=0,\quad d\mu^*=0,\quad d\mu=0.
\end{equation}
For each fixed $R>0$, take a nonnegative smooth compactly supported cutoff equal to one on $[-R,R]$. Local measure convergence then gives $\int_{-R}^R|z_{n,x}|^2\,dx\to0$. Local compactness gives $\int_{-R}^R|z_n|^2\,dx\to0$. Hence
\begin{equation}\label{eq:4.24}
\int_{-R}^R(|z_n|^2+|z_{n,x}|^2)\,dx\longrightarrow0\quad\text{for every fixed }R>0.
\end{equation}
It remains to exclude escape to infinity. The exact coefficient identity \eqref{eq:4.13} implies, for all $x,w$,
\[B_c(x,v_n,w)\geq\left(\frac{2\gamma-c}{2}-\norm{\phi}_\infty\norm{v_n}_\infty-\frac12\norm{v_n}_\infty^2\right)w^2.\]
Since $\norm{v_n}_\infty\to0$, for all sufficiently large $n$ this gives the global bound
\begin{equation}\label{eq:4.25}
B_c(x,v_n,w)\geq\frac{2\gamma-c}{4}w^2.
\end{equation}
Fix $R$ sufficiently large that
\begin{equation}\label{eq:4.26}
V_c(x)\geq\frac{c-\gamma}{2}\quad(|x|\geq R).
\end{equation}
The radius is now kept fixed while $n\to\infty$. The action expansion, the nonnegative penalty, and \eqref{eq:4.25} yield
\begin{align}\label{eq:4.27}
\frac{\F_c(\phi+v_n)}{\norm{v_n}_{H^1}^2}
&\geq \frac{2\gamma-c}{4}\norm{z_{n,x}}_2^2+\frac{c-\gamma}{4}\int_{|x|>R}|z_n|^2\,dx
-\frac{\norm{V_c^-}_\infty}{2}\int_{-R}^R|z_n|^2\,dx+\frac{R_c(v_n)}{\norm{v_n}_{H^1}^2}\notag\\
&=\frac{2\gamma-c}{4}\norm{z_{n,x}}_2^2+\frac{c-\gamma}{4}\int_{|x|>R}|z_n|^2\,dx+o(1).
\end{align}
The last equality uses \eqref{eq:4.24} and \eqref{eq:4.5}. By \eqref{eq:4.23}, the left-hand side tends to zero. Both terms of \eqref{eq:4.27} are nonnegative with strictly positive coefficients, so
\[\norm{z_{n,x}}_2\longrightarrow0,\qquad\int_{|x|>R}|z_n|^2\,dx\longrightarrow0.\]
Together with local $L^2$ convergence on $[-R,R]$, this yields $\norm{z_n}_{H^1}\to0$, contradicting \eqref{eq:4.22}. The proof is complete.
\end{proof}

\section{Modulation and proof of orbital stability}\label{sec:stability}
With nonlinear Lyapunov coercivity established in Proposition~\ref{cor:nonlinear}, the remaining work is to prove Theorem~\ref{thm:stability}. Quantitative modulation and conservation give $H^1$ control through coercivity, and the Hamiltonian identity then recovers the $W^{1,4}$ component.
Let
\begin{equation}\label{eq:5.1}
\Oc:=\{\phi_c(\cdot-y):y\in\R\},\qquad d_X(u,\Oc):=\inf_{y\in\R}\norm{u-\phi_c(\cdot-y)}_X.
\end{equation}
The following lemma provides the quantitative $X$-modulation estimate needed to apply Proposition~\ref{cor:nonlinear} at a first $X$-exit time.

\begin{lemma}\label{lem:modulation}
There exist $\rho_{\mathrm{mod}}>0$ and $C_{\mathrm{mod}}>0$ such that, whenever $d_X(u,\Oc)<\rho_{\mathrm{mod}}$, there exists $y=y(u)\in\R$ such that
\begin{equation}\label{eq:5.2}
\ip{u(\cdot+y)-\phi}{\phi_x}=0
\end{equation}
and
\begin{equation}\label{eq:5.3}
\norm{u(\cdot+y)-\phi}_X\leq C_{\mathrm{mod}}d_X(u,\Oc).
\end{equation}
\end{lemma}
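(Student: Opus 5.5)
The plan is the standard implicit-function modulation argument near the orbit, carried out in $X$ and made uniform in the translation parameter by translation invariance. Set $\phi=\phi_c$, fix $y_0$, and consider
\[
\Phi(y,u):=\ip{u(\cdot+y)-\phi}{\phi_x}=\ip{u}{\phi_x(\cdot-y)},
\]
where we used $\ip{\phi}{\phi_x}=0$. Since $\phi_x$ and $\phi_{xx}$ belong to $L^2$, the map $\Phi$ is $C^1$ in $y$ and Lipschitz in $u\in L^2$ uniformly in $y$. At $u=\phi(\cdot-y_0)$ we have $\Phi(y_0,u)=0$ and
\[
\partial_y\Phi(y,u)=-\ip{u}{\phi_{xx}(\cdot-y)}.
\]
At the base point this equals $-\ip{\phi}{\phi_{xx}}=\norm{\phi_x}_2^2>0$.

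I will avoid an abstract implicit-function theorem and argue directly. Given $u$ with $d_X(u,\Oc)<\rho$, choose $y_0$ with $\norm{u-\phi(\cdot-y_0)}_X\leq 2d_X(u,\Oc)=:2d$. After translating, we may take $y_0=0$; write $u=\phi+h$ with $\norm{h}_X\leq 2d$. Then
\[
g(y):=\Phi(y,u)=\ip{\phi(\cdot+y)-\phi}{\phi_x}+\ip{h}{\phi_x(\cdot-y)}.
\]
The first term $g_0(y)$ is smooth and satisfies $g_0(0)=0$ and $g_0'(0)=\norm{\phi_x}_2^2$. Hence there is $y_*>0$, depending only on $\phi$, such that $g_0'\geq\frac12\norm{\phi_x}_2^2$ on $[-y_*,y_*]$. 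The second term has $C^1$ norm on $\R$ bounded by $\norm{h}_2(\norm{\phi_x}_2+\norm{\phi_{xx}}_2)\leq Cd$. For $\rho$ small, $g$ is therefore strictly increasing on $[-y_*,y_*]$ with $g'\geq\frac14\norm{\phi_x}_2^2$. Moreover $|g(0)|\leq Cd$, so $g(\pm y_*)$ have opposite signs once $Cd<\frac14\norm{\phi_x}_2^2y_*$. This gives a unique zero $y$ with
\[
|y|\leq \frac{4|g(0)|}{\norm{\phi_x}_2^2}\leq C'd,
\]
which proves \eqref{eq:5.2}.

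For \eqref{eq:5.3}, use
\[
\norm{u(\cdot+y)-\phi}_X=\norm{u-\phi(\cdot-y)}_X\leq\norm{h}_X+\norm{\phi-\phi(\cdot-y)}_X.
\]
Since $\phi$ is smooth and exponentially decaying, $\phi_x,\phi_{xx}\in L^2\cap L^4$, and the mean value theorem in Bochner form gives
\[
\norm{\phi-\phi(\cdot-y)}_{X}\leq |y|\,\norm{\phi_x}_{X}\leq C''d.
\]
This yields \eqref{eq:5.3} with $C_{\mathrm{mod}}=2+C''$.

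The only delicate point is uniformity. The constants must not depend on $y_0$; this holds because everything is expressed through the translated profile and the $L^2$ pairing, which are invariant. The solvability radius $y_*$ must be fixed before $\rho$ is chosen, which is why I use the monotonicity argument rather than a non-quantitative implicit-function theorem. The $W^{1,4}$ part causes no difficulty, since the constraint only involves the $L^2$ pairing and the $X$-norm enters only through the triangle inequality. Continuity of $y(u)$ is not required, so uniqueness serves only to make $y(u)$ well defined.
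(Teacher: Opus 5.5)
Your argument is correct and is essentially the paper's proof: take a translate within $2d$ of $u$, solve the scalar orthogonality condition for a correction $|\eta|\le Cd$, and conclude by the triangle inequality together with $\norm{\phi(\cdot-\eta)-\phi}_X\le C|\eta|$. The one difference is that you prove the one-dimensional implicit-function step by hand via monotonicity, which makes the uniformity explicit, whereas the paper applies the implicit-function theorem to $\Psi(h,\eta)$ on $H^1\times\R$. There is one small point the paper handles and you skip: when $d_X(u,\Oc)=0$, your choice of $y_0$ with $\norm{u-\phi(\cdot-y_0)}_X\le 2d$ requires the infimum to be attained. The paper justifies this by closedness of the orbit in $X$ (strongly convergent translates cannot have parameters escaping to infinity).
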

\begin{proof}
Let $d=d_X(u,\Oc)$. The orbit $\Oc$ is closed in $X$:  a strongly convergent sequence of translates cannot have translation parameters escaping to infinity, while bounded translation parameters admit a convergent subsequence by continuity of translations in $X$. Thus, if $d=0$, then $u\in\Oc$ and the conclusion is immediate. We may therefore assume $d>0$ and choose $y_0\in\R$ so that
\begin{equation}\label{eq:5.4}
\norm{u(\cdot+y_0)-\phi}_X\leq2d.
\end{equation}
Set $h=u(\cdot+y_0)-\phi$. Consider
\begin{equation}\label{eq:5.5}
\Psi(h,\eta)=\ip{h(\cdot+\eta)+\phi(\cdot+\eta)-\phi}{\phi_x}.
\end{equation}
The map $(h,\eta)\mapsto h(\cdot+\eta)$ is $C^1$ from $H^1(\R)\times\R$ to $L^2(\R)$, with derivative in the translation parameter given by $h_x(\cdot+\eta)$. Hence $\Psi$ is $C^1$ near $(0,0)$ as a map $H^1\times\R\to\R$. Moreover, $\Psi(0,0)=0$ and $\partial_\eta\Psi(0,0)=\norm{\phi_x}_2^2>0$. The implicit-function theorem yields, for $\norm{h}_{H^1}$ small, a unique small correction $\eta=\eta(h)$ satisfying $\Psi(h,\eta)=0$ and
\begin{equation}\label{eq:5.6}
|\eta|\leq C\norm{h}_{H^1}\leq2Cd.
\end{equation}
Set $y=y_0+\eta$. Then \eqref{eq:5.2} holds. Since translations are isometries on $X$,
\[\norm{u(\cdot+y)-\phi}_X\leq\norm{h(\cdot+\eta)}_X+\norm{\phi(\cdot+\eta)-\phi}_X=\norm{h}_X+\norm{\phi(\cdot+\eta)-\phi}_X.\]
The smooth exponentially decaying profile satisfies $\phi\in H^2\cap W^{2,4}$,
hence $\norm{\phi(\cdot+\eta)-\phi}_X\leq C|\eta|$ for small $\eta$.
Combining \eqref{eq:5.4} and \eqref{eq:5.6} yields \eqref{eq:5.3}. The proof is complete.
\end{proof}

\begin{proof}[Proof of Theorem~\ref{thm:stability}]
Suppose, to the contrary, that $\phi_c$ is not orbitally stable in $X$. Then there exist initial data $u_{0,n}\in H^s$, $s>\frac{5}{2}$, with
\begin{equation}\label{eq:5.7}
\norm{u_{0,n}-\phi}_X\to0,
\end{equation}
and corresponding strong solutions $u_n(t)$ for which, for some fixed $\varepsilon_0>0$ and times $\tau_n<T_n$,
\begin{equation}\label{eq:5.8}
d_X(u_n(\tau_n),\Oc)\geq\varepsilon_0.
\end{equation}
Choose
\[0<\rho<\min\left\{\frac{\varepsilon_0}{2},\rho_{\mathrm{mod}},\frac{\rho_c}{2C_{\mathrm{mod}}}\right\},\]
where $\rho_c$ is the radius in Proposition~\ref{cor:nonlinear}. By Proposition~\ref{prop:wellposed}, $t\mapsto u_n(t)$ is continuous in $X$. Distance to the fixed closed orbit is $1$-Lipschitz, so $t\mapsto d_X(u_n(t),\Oc)$ is continuous. For all sufficiently large $n$, its initial value is less than $\rho$. Thus the first exit time
\begin{equation}\label{eq:5.9}
t_n^*:=\inf\{t\in[0,\tau_n]:d_X(u_n(t),\Oc)=\rho\}
\end{equation}
is well defined and satisfies
\begin{equation}\label{eq:5.10}
d_X(u_n(t_n^*),\Oc)=\rho.
\end{equation}
By Lemma~\ref{lem:modulation}, there exists a shift $y_n$ such that, with $\widetilde u_n(x)=u_n(t_n^*,x+y_n)$ and $v_n=\widetilde u_n-\phi$, we have
\[\ip{v_n}{\phi_x}=0,\qquad\norm{v_n}_X\leq C_{\mathrm{mod}}\rho<\rho_c.\]
Thus Proposition~\ref{cor:nonlinear} implies that
\begin{equation}\label{eq:5.11}
C_c\norm{v_n}_{H^1}^2\leq\F_c(\widetilde u_n).
\end{equation}
By translation invariance and conservation of $E$ and $H$,
\[\F_c(\widetilde u_n)=\F_c(u_n(t_n^*))=\F_c(u_{0,n}).\]
The map $\F_c:X\to\R$ is continuous, and \eqref{eq:5.7} gives $\F_c(u_{0,n})\to0$. Hence
\begin{equation}\label{eq:5.12}
\widetilde u_n\to\phi\quad\text{strongly in }H^1.
\end{equation}
To recover convergence in $W^{1,4}$, we use the Hamiltonian identity \eqref{eq:2.4}:
\begin{equation}\label{eq:5.13}
\int_\R u_x^4\,dx=12H(u)+3\int_\R u^4\,dx+6\int_\R u^2u_x^2\,dx+6\gamma\int_\R u^2\,dx.
\end{equation}
Conservation of $H$ and \eqref{eq:5.7} imply $H(\widetilde u_n)=H(u_{0,n})\to H(\phi)$. By \eqref{eq:5.12} and the one-dimensional embedding $H^1\hookrightarrow L^\infty$,
\[\widetilde u_n\to\phi\quad\text{in }L^\infty\cap L^2\cap L^4.\]
Furthermore,
\begin{align*}
\left|\int\widetilde u_n^2\widetilde u_{n,x}^2\,dx-\int\phi^2\phi_x^2\,dx\right|
&\leq\norm{\widetilde u_n^2-\phi^2}_\infty\norm{\widetilde u_{n,x}}_2^2
+\norm{\phi}_\infty^2\norm{\widetilde u_{n,x}^2-\phi_x^2}_1\longrightarrow0.
\end{align*}
Thus \eqref{eq:5.13} gives
\begin{equation}\label{eq:5.14}
\norm{\widetilde u_{n,x}}_4^4\to\norm{\phi_x}_4^4.
\end{equation}
The sequence $\widetilde u_{n,x}$ is bounded in $L^4$. Every weak $L^4$ subsequential limit equals $\phi_x$, because \eqref{eq:5.12} already gives convergence of the derivatives in distributions. Hence $\widetilde u_{n,x}\rightharpoonup\phi_x$ in $L^4$. By \eqref{eq:5.14} and uniform convexity of $L^4$, $\widetilde u_{n,x}\to\phi_x$ in $L^4$. Together with \eqref{eq:5.12}, this proves
\begin{equation}\label{eq:5.15}
\widetilde u_n\to\phi\quad\text{strongly in }X.
\end{equation}
Consequently, $d_X(u_n(t_n^*),\Oc)\leq\norm{\widetilde u_n-\phi}_X\to0$, which contradicts \eqref{eq:5.10}.

The equation and the conserved functionals are invariant under $u\mapsto-u$. Therefore the same argument applies to $-\phi_c$.
\end{proof}

\noindent{\bf Acknowledgement} Ji Li was supported by National Key R\&D Program of China (Grant No. 2024YFA1012801)
and National Natural Science Foundation of China (Grant No. 12571174,12625111).
X. Deng was partially supported by National Natural Science Foundation of China (Grant No. 12571172).
Z. Liu was supported by National Natural Science Foundation of China (Grant No. 12571188)
and Natural Science Foundation of Chongqing (No. CSTB2025NSCQ-GPX0705).

\section*{Declaration of competing interest}
The authors declared that they have no conflict of interest.

\section*{Data availability}
No data was used for the research described in the article.

\end{document}